\def\semidirect{\rtimes}
\def\Spec{\mathop{\mathrm{Spec}}\nolimits}
\def\Frac{\mathop{\mathrm{Frac}}\nolimits}
\def\script#1{\EuScript{#1}}
\def\cp{\C P}
\def\H{\mathrm{H}}
\def\Spf{\mathop{\mathrm{Spf}}\nolimits}
\def\PGamma{P\Gamma}
\def\dihedral{D}
\def\WW{\mathfrak{W}}
\def\an{{\rm an}}
\def\BT{\script{B}}
\def\Pitilde{\tilde{\Pi}}
\def\Omegahat{\widehat{\Omega}}
\def\lambdabar{\bar{\lambda}}
\def\P{\mathbb{P}}
\def\Q{\mathbb{Q}}
\def\C{\mathbb{C}}
\def\R{\mathbb{R}}
\def\Z{\mathbb{Z}}
\def\F{\mathbb{F}}
\def\tensor{\otimes}
\let\iso\cong
\let\cong\equiv
\def\calO{\script{O}}
\def\PSL{{\rm PSL}}
\def\PGL{{\rm PGL}}
\def\GL{{\rm GL}}
\def\PO{{\rm PO}}
\def\sset{\subseteq}
\def\Berk{{\rm Berk}}
\def\Qbar{\overline{\Q}}
\def\Kbar{K\llap{$\overline{\phantom{\rm K}}$}}
\def\Kbarscript{K\llap{$\overline{\phantom{\scriptstyle\rm K}}$}}
\newtheorem{theorem}{Theorem}
\newtheorem{prop}[theorem]{Proposition}
\newtheorem{lemma}[theorem]{Lemma}
\newtheorem{lem}[theorem]{Lemma}
\renewcommand\theenumi{\roman{enumi}}
\theoremstyle{remark}
\newtheorem*{remark}{Remark}
\numberwithin{theorem}{section}
\numberwithin{equation}{section}
\numberwithin{figure}{section}
\begin{document}

\title[A fake plane via 2-adic uniformization with torsion]{A fake projective plane via 2-adic uniformization with torsion}
\author{Daniel Allcock}
\address{Department of Mathematics\\University of Texas, Austin}
\email{allcock@math.utexas.edu}
\urladdr{http://www.math.utexas.edu/\textasciitilde allcock}
\thanks{First author supported by NSF grant DMS-1101566.}
\author{Fumiharu Kato}
\address{Department of Mathematics\\Kumamoto University}
\email{kato@sci.kumamoto-u.ac.jp}
\urladdr{http://www.sci.kumamoto-u.ac.jp/\textasciitilde kato/index-e.html}
\subjclass[2010]{%
11F23; 
14J25
}

\date{November 2, 2014}

\begin{abstract}
We adapt the theory of non-Archimedean uniformization to construct a
smooth surface from a lattice in $\PGL_3(\Q_2)$ that has
nontrivial torsion.  It turns out to be a fake projective plane,  commensurable with
Mumford's fake plane yet distinct from it and the other
fake planes that arise from $2$-adic uniformization by
torsion-free groups.  As part of the proof, and of independent
interest, we compute the homotopy type of the Berkovich space of our
plane.
\end{abstract}

\maketitle

\noindent
The original definition of a fake projective plane is
a compact complex surface  that has the
same Betti numbers as $\cp^2$, but is not $\cp^2$.  The first example was
given by Mumford \cite{Mum1}, and all fake planes have recently been
classified by Prasad-Yeung \cite{PY} and Cartwright-Steger
\cite{CS}: there are 100 of them up to isomorphism, in 50
complex-conjugate pairs.

Mumford used the theory of $2$-adic uniformization, beginning with a
well-chosen discrete subgroup of $\PGL_3(\Q_2)$.  His construction
yields a fake projective plane over $\Q_2$.  For this to make sense, we
use Mumford's definition of a fake plane $X$ over a
general field $K$, which specializes to the above definition when
$K=\C$.  Namely: $X$ is a smooth and geometrically connected proper
surface 
over $K$, such that its base change to $X_{\Kbarscript}$
satisfies $P_g=q=0$, $c^2_1=3c_2=9$ and has ample canonical class.
Here $\Kbar$ denotes the algebraic closure of $K$.
To get a fake plane in the original sense, one identifies $\Qbar_2$
with $\C$ by some isomorphism.

The machinery used by Mumford required his discrete subgroup of
$\PGL_3(\Q_2)$ to be torsion-free, and there are exactly two
additional fake planes that can be constructed this way \cite{IK98}.
The purpose of this paper is to show that torsion can be allowed in
the construction, leading to a ``new'' fake plane.  Of course, it
occurs in the Prasad-Yeung--Cartwright-Steger enumeration; what is
new is that there is another fake plane realizable by $2$-adic
uniformization.

This is interesting for two reasons.  First, uniformization by groups
containing torsion is possible and useful.  Second, in the $2$-adic
approach, $X$ is the generic fiber of a flat family over the $2$-adic
integers $\Z_2$, and the central fiber gives a great deal of geometric
information about $X$ that is not available in the Prasad-Yeung
approach.  For example, Ishida \cite{Ishida} showed that Mumford's
fake plane covers an elliptic surface whose singular fibers have
specific types, and Keum was able to use this to construct another
fake plane \cite{Keum}.  The main open problem about fake planes is to
construct one by non-transcendental methods.  Since $2$-adic
uniformization yields additional information about the planes that may
be constructed using it, we may reasonably hope that it will help
solve this problem.

\section{Non-Archimedean uniformization}
\label{sec-uniformization}

\noindent
In this section we give background material on non-Archimedean
uniformization and recall how this guided Mumford in choosing the
tor\-sion-free lattice in $\PGL_3(\Q_2)$ that uniformizes
his fake plane.  
We call his lattice $\Sigma_M$; his notation was $\Gamma$.
In the next section we will describe another lattice
$\Sigma_L\sset\PGL_3(\Q_2)$ and show how to use it to build a fake
plane, even though $\Sigma_L$ contains torsion.

Let $R$ be a complete discrete valuation ring, $K$ its field of
fractions and $k=R/\pi R$ the residue field, where $\pi\in R$ is a
fixed uniformizer.  We assume $k$ is finite with say $q$ elements.  We
write $\BT_K$ for the Bruhat--Tits building of $\PGL_3(K)$.  This is a
$2$-dimensional simplicial complex whose vertices are the homothety
classes of rank-three $R$-submodules of $K^3$.  Vertices are joined by
an edge if (after scaling) one module contains the other with quotient
a $1$-dimensional $k$ vector space.  Three vertices span a
triangle if they are pairwise joined by edges.   $\PGL_3(K)$
acts on $\BT_K$ in the obvious way.

The Drinfeld upper-half plane $\Omega^2_K$ over $K$ means the set of
closed points of $\P^2_K$, minus those that lie on
$K$-rational lines.  It is an 
admissible
open subset of the rigid analytic space
$\P^{2,\an}_K$, 
hence a rigid analytic space itself.  We write
$\Omegahat^2_K$ for the `standard' formal model of $\Omega^2_K$ from
\cite[Prop.\ 2.4]{Mus1}, where it is denoted 
$\script{P}(\Delta_{\ast})$ with $\Delta_{\ast}=\BT_K$.  This is a
formal scheme, flat and locally of finite type over $\Spf R$, and equipped
with a $\PGL_3(K)$-action.  It has the following properties.
\begin{itemize}
\item The closed fiber $\Omegahat^2_{K,0}$ is normal crossing, with
  each component a non-singular rational surface over $k$,  isomorphic
  to $\P^2_k$ blown up at all
  $k$-rational points.
\item The double curves of $\Omegahat^2_{K,0}$ that lie in one of these components are the exceptional
  curves of this blowup, which are $(-1)$-curves, and the proper
  transforms of $k$-rational lines of $\P^2_k$, which are
  $(-q)$-curves.    Each double curve has
  different self-intersection numbers in the two components containing
  it.
\item The dual complex of the closed fiber $\Omegahat^2_{K,0}$ is
$\PGL_3(K)$-equi\-var\-iant\-ly isomorphic to $\BT_K$.
\end{itemize}

\noindent
The second property allows us to orient the edges of $\BT_K$, a
property we will use only in section~\ref{sec-distinctness}.  An edge
corresponds to a curve where two components of $\Omegahat^2_K$ meet;
we orient the edge so that it goes from the component in which the
curve has self-intersection $-1$ to the one in which it has
self-intersection $-q$.  The mnemonic is that the arrow on the edge
can be thought of as a greater-than sign, indicating $-1>-q$.
Obviously $\PGL_3(K)$ respects the orientations of edges.
A triangle in $\BT_K$ corresponds to an intersection point of $3$
components of $\Omegahat^2_K$, and   from the description of the
double curves it
is easy to see that the edges corresponding to the three incident
double curves form an oriented
circuit.  This induces a cyclic ordering on the set of these double curves.
(Everything in this paragraph could 
alternately be developed in terms of $R$-submodules of $K^3$
containing each other.)

Now suppose $\Gamma$ is a torsion-free lattice in $\PGL_3(K)$; all
lattices are uniform, so the coset space is compact \cite{Tamagawa}.
  Because $\Gamma$ is discrete and torsion-free, it acts freely on
  $\BT_K$.  By the correspondence between the vertices of $\BT_K$ and
  the components of $\Omegahat^2_K$, $\Gamma$ also acts freely on
  $\widehat{\Omega}^2_K$, and properly discontinuously with respect to
  the Zariski topology.  The quotient
  $\widehat{\mathfrak{X}}_\Gamma:=\widehat{\Omega}^2_K/\Gamma$ is a
  proper flat formal $R$-scheme, whose closed fiber
  $\widehat{\mathfrak{X}}_{\Gamma,0}$ is a normal crossing divisor
  \cite[Thm.~3.1]{Mus1}.

Its relative dualizing sheaf
$\omega_{\widehat{\mathfrak{X}}_{\Gamma}/R}$ over $R$ is thus the
sheaf of relative log differential $2$-forms.  Since there are
`enough' double curves on each component one can show that
$\omega_{\widehat{\mathfrak{X}}_{\Gamma}/R}$ is relatively ample
(\cite[p.\ 204]{Mus1}).  This implies that the formal scheme
$\widehat{\mathfrak{X}}_{\Gamma}$ is algebraizable, that is,
isomorphic to the $\pi$-adic formal completion of a proper flat
$R$-scheme $\mathfrak{X}_{\Gamma}$, which is uniquely determined up to
isomorphism.  The generic fiber
$X_{\Gamma}:=\mathfrak{X}_{\Gamma,\eta}$ is then a proper smooth
surface over $K$, and has ample canonical class.  See
\cite[\S5.4]{EGA3} for background.

On the other hand, $\Gamma$ also acts freely and properly
discontinuously on $\Omega^2_K$.  This allows the construction of the
rigid analytic quotient $\Omega^2_K/\Gamma$, which turns out to be
$K$-isomorphic to the rigid analytic surface $X^{\an}_{\Gamma}$ got
from $X_\Gamma$ by analytification.  In other words,
$\Omega^2_K/\Gamma$
is the Raynaud generic fiber of the formal scheme
$\hat{\mathfrak{X}}_{\Gamma}$. 
In particular, the closed points of
$X_\Gamma$ are in bijection with those of $\Omega^2_K/\Gamma$.  

Now we come to Mumford's construction of his fake plane:

\begin{prop}[{\rm \cite[\S 1]{Mum1}}]
\label{prop-invariants}
Let $N$ be the number of\, $\Gamma$-orbits on the vertices of $\BT_K$,
and as usual write $q(X):=\dim\H^1(X,\calO_X)$ for the irregularity 
and $P_g(X):=\dim\H^2(X,\calO_X)$ 
for the geometric genus of $X=X_{\Gamma}$.
Then
\begin{itemize}
\item[{\rm (a)}] $\chi(\calO_X):=1-q(X)+P_g(X)$ is equal to $\frac{\,N\,}{3}(q-1)^2(q+1);$
\item[{\rm (b)}] $c^2_1(X)=3c_2(X)=3N(q-1)^2(q+1);$
\item[{\rm (c)}] $q(X)=0;$
\item[{\rm (d)}] the canonical class $K_X$ is ample.
\hfill\qed
\end{itemize}
\end{prop}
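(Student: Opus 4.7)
The plan is to pass from the smooth generic fiber $X_\Gamma$ to the closed fiber $X_0 := \widehat{\mathfrak{X}}_{\Gamma,0}$ using flatness, exploiting the explicit structure of $X_0$ as a union of $N$ components glued along rational double curves meeting at triple points. Part (d) is immediate from the preceding discussion: the relative dualizing sheaf $\omega_{\widehat{\mathfrak X}_\Gamma/R}$ was shown to be relatively ample, so its restriction $K_{X_\Gamma}$ to the generic fiber is ample.

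For the combinatorial setup, since $\Gamma$ acts freely on $\BT_K$ and the link of every vertex is the flag complex of $\P^2(k)$ (with $2(q^2+q+1)$ vertices and $(q+1)(q^2+q+1)$ edges), I count that $\BT_K/\Gamma$ has $N$ vertices, $N(q^2+q+1)$ edges, and $N(q+1)(q^2+q+1)/3$ triangles---hence $X_0$ has that many components $Y_i$, double curves $D$, and triple points $T$ respectively. For (a), flatness gives $\chi(\calO_{X_\Gamma}) = \chi(\calO_{X_0})$, and the standard normal-crossing resolution
\begin{equation*}
0 \to \calO_{X_0} \to \bigoplus_i \calO_{Y_i} \to \bigoplus_D \calO_D \to \bigoplus_T \calO_T \to 0
\end{equation*}
(with each $Y_i$ rational, each $D \cong \P^1_k$, and each $T$ a point, all with $\chi(\calO)=1$) yields the alternating sum $N - N(q^2+q+1) + N(q+1)(q^2+q+1)/3$, which simplifies to $\frac{N}{3}(q-1)^2(q+1)$.

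For (b), $K_{X_\Gamma}^2 = (\omega_{X_0})^2$ by constancy of the self-intersection of a line bundle in a flat proper family, and $\omega_{X_0}|_{Y_i}\cong\calO_{Y_i}(K_{Y_i}+D_i)$ on each component (with $D_i$ the sum of the double curves lying on $Y_i$), giving the decomposition $(\omega_{X_0})^2 = \sum_i (K_{Y_i}+D_i)^2$. On each $Y_i$, writing divisors in terms of the hyperplane class $H$ and the $q^2+q+1$ exceptional divisors $E_j$, one has $D_i = (q^2+q+1)H - q\sum_j E_j$ and $K_{Y_i}=-3H+\sum_j E_j$, from which a one-line intersection computation gives $(K_{Y_i}+D_i)^2 = 3(q-1)^2(q+1)$; summing yields $c_1^2(X) = 3N(q-1)^2(q+1)$. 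Noether's formula $\chi(\calO) = (c_1^2+c_2)/12$ then produces $c_2(X) = N(q-1)^2(q+1)$, confirming the equality $c_1^2 = 3c_2$.

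For (c), semicontinuity of $h^1(\calO)$ in the flat family gives $q(X_\Gamma) \le h^1(X_0, \calO_{X_0})$. The same resolution, together with the vanishing of higher coherent cohomology on each $Y_i$, $D$, and $T$, identifies $H^1(X_0, \calO_{X_0})$ with the first cohomology of the cellular cochain complex of $\BT_K/\Gamma$ with $k$-coefficients; since $\BT_K$ is contractible and $\Gamma$ acts freely, this is $H^1(\Gamma, k) = \mathrm{Hom}(\Gamma^{\mathrm{ab}}, k)$, which vanishes provided $\Gamma^{\mathrm{ab}}$ is finite of order coprime to $\mathrm{char}(k)$---a property verifiable for the specific lattices of interest. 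The main obstacle is justifying both the component-wise decomposition $(\omega_{X_0})^2 = \sum_i (K_{Y_i}+D_i)^2$ on the singular closed fiber and its identification with $K_{X_\Gamma}^2$ on the smooth generic fiber; once this deformation-theoretic framework is in place, the combinatorial counts on $\BT_K/\Gamma$ and the intersection arithmetic on each $Y_i$ are routine.
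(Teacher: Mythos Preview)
The paper does not supply its own proof of this proposition: it is stated with a citation to Mumford \cite[\S1]{Mum1} and closed immediately with \qed. So the comparison is really between your argument and Mumford's (together with Mustafin \cite{Mus1}, who carries out the general case). For parts (a), (b) and (d) your argument is correct and is exactly Mumford's: the counts of cells in $\BT_K/\Gamma$, the normal-crossing resolution of $\calO_{X_0}$, the adjunction $\omega_{X_0}|_{Y_i}\cong\calO_{Y_i}(K_{Y_i}+D_i)$, and the intersection calculation $(K_{Y_i}+D_i)^2=3(q-1)^2(q+1)$ are all as in \cite{Mum1}.

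Your treatment of (c), however, does not prove the stated result. Semicontinuity bounds $q(X_\Gamma)$ by $h^1(X_0,\calO_{X_0})$, and your resolution correctly identifies the latter with the simplicial cohomology $H^1(\BT_K/\Gamma;k)\cong\mathrm{Hom}(\Gamma^{\rm ab},k)$ with coefficients in the \emph{residue field}~$k$. But the proposition asserts $q(X)=0$ for every torsion-free lattice $\Gamma$, with no hypothesis on $\Gamma^{\rm ab}$; nothing rules out $p$-torsion in $\Gamma^{\rm ab}$, in which case $H^1(\Gamma;k)\neq0$ and your bound is useless. Your caveat ``a property verifiable for the specific lattices of interest'' concedes this, but that is not what is being claimed.

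The argument Mumford and Mustafin actually use avoids the special fiber for this step. One computes $H^1(X_\Gamma,\calO_{X_\Gamma})$ rigid-analytically, via a \v{C}ech covering of $\Omega^2/\Gamma$ by the affinoids indexed by vertices of $\BT_K/\Gamma$; the nerve is again $\BT_K/\Gamma$, but now the coefficients are in the characteristic-zero field $K$, giving $H^1(X_\Gamma,\calO_{X_\Gamma})\cong H^1(\BT_K/\Gamma;K)\cong\mathrm{Hom}(\Gamma^{\rm ab},K)$. Since $\PGL_3(K)$ has Kazhdan's property~(T), every lattice has finite abelianization, and $\mathrm{Hom}$ of a finite group into a field of characteristic~$0$ vanishes. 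That is the missing idea.
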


Mumford took $R=\Z_2$ (so $q=2$) and chose a lattice in $\PGL_3(\Q_2)$
we call $\Sigma_M$, which is vertex-transitive (so $N=1$) and
torsion-free (so the machinery applies).  Abbreviating $X_{\Sigma_M}$
to $X_M$, it follows that $X_M$ is a fake projective plane over
$\Q_2$.  

We will use the same idea, but more work is required because the group
$\Sigma_L$ uniformizing our 
fake plane $X_L$ contains torsion.  We
have now provided all the background necessary for the construction of our
plane, so the reader could skip to section~\ref{sec-construction} immediately.  

\medskip
By \cite{IK98} there are exactly three fake planes that can be
obtained from Mumford's construction, using torsion-free groups.  To
show that our fake plane is distinct from them, in section~\ref{sec-distinctness} we
will compare their Berkovich spaces.  Here is the necessary
background, cf.\ \cite{Berk1}\cite{Berk2}.

In the sequel, for a rigid space or an algebraic variety $Z$ over a
complete non-Archimedean field, we denote by $Z^{\Berk}$ the
associated Berkovich space; see \cite[1.6]{Berk2} for the relation
between rigid geometry and Berkovich geometry, and \cite[3.4]{Berk1}
for Berkovich GAGA.  Notice that, in both cases, the associated
Berkovich space $Z^{\Berk}$ is uniquely determined, and that the
functor $Z\mapsto Z^{\Berk}$ is fully faithful.

Let $X$ be a quasi-projective variety over $K$, and $G$ a finite group acting on $X$ by automorphisms over $K$.
It is well-known that the quotient $X/G$ is represented by a quasi-projective variety over $K$.
\begin{lemma}\label{lem-finitequot}
The quotient $X^{\Berk}/G$ by the canonically induced action of $G$ on $X^{\Berk}$ is represented by a Berkovich $K$-analytic space, and is naturally isomorphic to $(X/G)^{\Berk}$.
Moreover, the underlying topological space of $X^{\Berk}/G$ coincides with the topological quotient of the topological space $X^{\Berk}$ by $G$.
\end{lemma}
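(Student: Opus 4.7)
\emph{Proof plan.} My plan is to reduce to the case of $X$ affine via a $G$-invariant affine open cover, handle that case by a direct analysis of seminorms, and then glue.

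Because $X$ is quasi-projective and $G$ is finite, every $G$-orbit in $X$ lies in some $G$-invariant affine open: take an affine open neighborhood of a chosen point, intersect its $G$-translates, and then remove the remaining orbits. So $X$ admits a cover by $G$-invariant affine opens $U_i=\Spec A_i$, whose quotients $U_i/G=\Spec A_i^G$ form an affine open cover of $X/G$. Analytification commutes with open immersions, the overlaps $U_i\cap U_j$ are again $G$-invariant and affine, and the $G$-action restricts compatibly to each chart. Hence once the lemma is proved for $X$ affine, it follows for general $X$ by gluing along overlaps.

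For the affine case, write $X=\Spec A$ and $X/G=\Spec A^G$, so that points of $X^{\Berk}$ are the multiplicative seminorms on $A$ extending the absolute value on $K$, with the pointwise topology, and similarly for $(X/G)^{\Berk}$. Since $A$ is a finite $A^G$-module, the restriction map $\varphi\colon X^{\Berk}\to(X/G)^{\Berk}$ is a finite morphism of Berkovich spaces, hence proper with finite fibers. It is $G$-invariant and so factors through a continuous map $\psi\colon X^{\Berk}/G\to(X/G)^{\Berk}$. The essential point is to show that $G$ acts transitively on each fiber $\varphi^{-1}(y)$, which is the Berkovich spectrum of the finite $\mathcal{H}(y)$-algebra $A\tensor_{A^G}\mathcal{H}(y)$. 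Classically $G$ acts transitively on the primes of $A$ above any prime of $A^G$, and on each such prime the decomposition subgroup acts transitively on the extensions of a given valuation by the Galois theory of valued fields; this transitivity is preserved after base change to the complete residue field $\mathcal{H}(y)$. Consequently $\psi$ is a continuous bijection, closed because $\varphi$ is proper, and therefore a homeomorphism, which establishes the final topological statement.

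For the analytic structure, transporting the structure of $(X/G)^{\Berk}$ through $\psi$ makes $X^{\Berk}/G$ into a $K$-analytic space. To verify that this agrees with the canonical analytic quotient, one checks the universal property on affinoid charts: a $G$-stable affinoid $V=\mathcal{M}(B)$ maps to $\mathcal{M}(B^G)$, which co-represents $G$-invariant morphisms out of $V$ in the affinoid category, and these local quotients glue over the $G$-saturation of an affinoid cover of $X^{\Berk}$. The naturality of the identification with $(X/G)^{\Berk}$ then follows from Berkovich GAGA and the full faithfulness of $Z\mapsto Z^{\Berk}$ recalled before the statement. The main obstacle I expect is the transitivity claim for the $G$-action on the Berkovich fibers of $\varphi$: a Berkovich point is a seminorm rather than a prime ideal, so the classical Galois-theoretic argument must be supplemented by the observation that two extensions of a given seminorm on $A^G$ to $A$ with the same underlying prime are conjugate under the decomposition subgroup after passage to $\mathcal{H}(y)$.
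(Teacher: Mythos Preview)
Your proof follows essentially the same route as the paper: reduce to the affine case, identify points of $X^{\Berk}$ with multiplicative seminorms on $A$, and show that the restriction map to $(X/G)^{\Berk}=(\Spec A^G)^{\Berk}$ is surjective with fibers exactly the $G$-orbits, then upgrade the resulting bijection to a homeomorphism and transport the analytic structure.

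Two differences are worth flagging. First, your argument that $\psi$ is a homeomorphism uses closedness via properness of the finite map $\varphi$; the paper instead invokes compactness of $X^{\Berk}$, which in the affine setting of positive dimension is not literally available, so your formulation is the more robust one. Second, your transitivity step is more elaborate than necessary. Once two seminorms $x,x'$ lying over $y$ have been adjusted by an element of $G$ so as to share the same kernel $\mathfrak{p}\subset A$, the paper simply observes that $\Frac(A/\mathfrak{p})\otimes_\kappa\mathcal{H}(y)$ is a finite extension of the complete valued field $\mathcal{H}(y)$, and hence carries a \emph{unique} extension of the valuation; thus $x=x'$ outright, with no appeal to decomposition groups or Galois theory of valued fields. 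This dissolves precisely the obstacle you anticipate in your final sentence, and you should replace that part of the argument with the uniqueness observation.
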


\begin{proof}
We may assume that $X$ is affine $X=\Spec A$, where $A$ is a finite type algebra over $K$.
By \cite[Remark 3.4.2]{Berk1}, we know that $X^{\Berk}$ is the set of all multiplicative seminorms $|\cdot|$ on $A$ that extends the valuation $\|\cdot\|$ on $K$.
Set $B=A^G$, the $G$-invariant part, which is again a finite type algebra over $K$.
Consider $Y=\Spec B$ and the map $\pi\colon X^{\Berk}\rightarrow
Y^{\Berk}$ given by the restriction of seminorms.

First we show that $\pi$ is surjective.
Take $y=|\cdot|_y\in Y^{\Berk}$, and let $\mathfrak{q}$ be the kernel of $|\cdot|_y$, which is a prime ideal of $B$.
Since $A/B$ is finite, there exists a prime ideal $\mathfrak{p}$ of $A$ such that $\mathfrak{p}\cap B=\mathfrak{q}$.
Let $\mathcal{H}(y)$ be the completion of the residue field $\kappa=\Frac(B/\mathfrak{q})$ by the valuation induced from $|\cdot|_y$.
Since $\kappa=\Frac(B/\mathfrak{q})\hookrightarrow\Frac(A/\mathfrak{p})$ is finite, $\mathcal{H}(y)\hookrightarrow\Frac(A/\mathfrak{p})\otimes_{\kappa}\mathcal{H}(y)$ is a finite extension of fields, and hence the valuation $|\cdot|_y$ uniquely extends to a valuation on the latter field.
We thus have a multiplicative seminorm $x=|\cdot|_x$ on $A$, which extends $|\cdot|_y$, which shows the surjectivity of $\pi$.

Let $x=|\cdot|_x\in X^{\Berk}$, and consider $x^g$ by $g\in G$, which
is the seminorm given by the composition
$A\stackrel{g}{\rightarrow}A\stackrel{|\cdot|_x}{\rightarrow}\R_{\geq
  0}$.  In this situation, we clearly have $x|_B=x^g|_B$.  Conversely,
suppose $x=\hbox{$|\cdot|$}_x$ and $x'=\hbox{$|\cdot|$}_{x'}$ are
points of $X^{\Berk}$ and satisfy $x|_B=x'|_B$.  Let $\mathfrak{p}$
and $\mathfrak{p}'$ be the kernels of $|\cdot|_x$ and $|\cdot|_{x'}$,
and $\mathfrak{q}$ the kernel of their restriction on $B$.  Since
$\mathfrak{q}=\mathfrak{p}\cap B=\mathfrak{p}'\cap B$, there exists
$g\in G$ such that $g^{-1}(\mathfrak{p})=\mathfrak{p'}$.  Replacing
$x'$ by $x^{\prime g}$, we may assume $\mathfrak{p}=\mathfrak{p}'$.
Then, by the uniqueness of the extension, $|\cdot|_x$ and $|\cdot|_{x'}$
coincide on $\Frac(A/\mathfrak{p})\otimes_{\kappa}\mathcal{H}(y)$, and
hence we have $x=x'$.

Thus the map $X^{\Berk}/G\rightarrow Y^{\Berk}$ is set-theoretically
bijective. By the construction, it is clearly continuous. Since
$X^{\Berk}$ is compact and $Y^{\Berk}$ is Hausdorff, we deduce that
$X^{\Berk}/G\rightarrow Y^{\Berk}$ gives a homeomorphism. Hence one
can endow $X^{\Berk}/G$ with the structure of a Berkovich strictly
$K$-analytic space induced from that of $Y^{\Berk}=(X/G)^{\Berk}$. It
is now clear that the resulting $K$-analytic space $X^{\Berk}/G$ gives
the quotient of $X^{\Berk}$ by $G$ in the category of Berkovich
$K$-analytic spaces.
\end{proof}

Let $\Gamma$ be a lattice in $\PGL_3(K)$.  (One could replace $3$ by
any $n$ by making trivial changes below.)  
By Selberg's lemma \cite{Alperin}
we know
there exists a torsion free normal subgroup $\Gamma_0\subseteq\Gamma$
of finite index.  Set $G=\Gamma/\Gamma_0$.  As discussed earlier in
this section, the quotient $\Omega^2_K/\Gamma_0$ is algebraizable, and
is of the form $X^{\an}_{\Gamma_0}$ by a smooth projective variety
$X_{\Gamma_0}$ over $K$, which is obtained as the generic fiber of the
algebraization $\mathfrak{X}_{\Gamma_0}$ of the formal scheme
$\widehat{\mathfrak{X}}_{\Gamma_0}=\Omegahat^2_K/\Gamma_0$. The rigid
analytic space $X^{\an}_{\Gamma_0}/G\cong\Omega^2_K/\Gamma$ is then
isomorphic to $(X_{\Gamma_0}/G)^{\an}$, hence is algebraized by the
projective variety $X_{\Gamma_0}/G$.  
We define $X_{\Gamma}$ as $X_{\Gamma_0}/G$.  It is
  independent of the choice of $\Gamma_0$ because if $\Gamma_0'$ were
  another torsion free normal subgroup of $\Gamma$ of finite index,
  then both $X_{\Gamma_0}/(\Gamma/\Gamma_0)$ and
  $X_{\Gamma_0'}/(\Gamma/\Gamma_0')$ are naturally identified with
$X_{\Gamma_0\cap\Gamma_0'}\!\bigm/\!\!\bigl(\Gamma/(\Gamma_0\cap\Gamma_0')\bigr)$

Now, let $K'/K$ be a finite extension.
In \cite{Berk3}, Berkovich constructed a natural $\PGL_3(K)$-equivariant retraction $\tau\colon\Omega^{2,\Berk}_K\otimes_K K'\rightarrow \BT_K$.
Moreover, as is well-known, there exists a natural $\PGL_3(K)$-equivariant homotopy between the identity map of $\Omega^{2,\Berk}_K\otimes_K K'$ and the retraction map $\tau$ (cf.\ \cite[Remark 5.12 (iii)]{Berk4}).
Since the quotient map $\Omega^{2,\Berk}_K\rightarrow\Omega^{2,\Berk}_K/\Gamma_0$ is a topological covering map, we have the induced deformation retract $X^{\Berk}_{\Gamma_0}=\Omega^{2,\Berk}_K/\Gamma_0\rightarrow\BT_K/\Gamma_0$.
By this and Lemma \ref{lem-finitequot}, we have:

\begin{lemma}
\label{lem-homotopy-type-is-building-modulo-group}
Let $\Gamma$ be a lattice in $\PGL_3(K)$, and $X_{\Gamma}$ the projective variety over $K$ obtained as above.
Then, for any finite field extension $K'/K$, $X^{\Berk}_{\Gamma}\otimes_K K'$ deformation-retracts to the quotient of the geometric realization of $\BT_K$ by $\Gamma$.

In particular, the homotopy type of $X^{\Berk}_{\Gamma}\otimes_K K'$ is
the same as that of the topological space $\BT_K/\Gamma$.
\qed
\end{lemma}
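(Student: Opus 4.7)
The strategy is to descend Berkovich's equivariant retraction in two stages: first through the torsion-free normal subgroup $\Gamma_0 \sset \Gamma$ via covering-space theory, then through the finite quotient $G = \Gamma/\Gamma_0$ via Lemma \ref{lem-finitequot}. The setup already built in the paragraphs preceding the lemma does essentially all of the work, so the proof amounts to stringing the pieces together.

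First, I would invoke the $\PGL_3(K)$-equivariant retraction $\tau\colon\Omega^{2,\Berk}_K\tensor_K K'\to\BT_K$ and the $\PGL_3(K)$-equivariant homotopy $H$ between $\tau$ and the identity, both cited from Berkovich. Restricting along $\Gamma_0\sset\PGL_3(K)$, since $\Gamma_0$ is torsion-free and discrete it acts freely and properly discontinuously on $\Omega^{2,\Berk}_K\tensor_K K'$ and on $\BT_K$, so both quotient maps are topological coverings. By equivariance, $\tau$ and $H$ descend to a deformation retraction from $(\Omega^{2,\Berk}_K\tensor_K K')/\Gamma_0 = X^{\Berk}_{\Gamma_0}\tensor_K K'$ onto $\BT_K/\Gamma_0$. (The identification $(\Omega^{2,\Berk}_K\tensor_K K')/\Gamma_0 \iso X^{\Berk}_{\Gamma_0}\tensor_K K'$ is the Berkovich counterpart of the rigid-analytic identification already cited in the text, combined with compatibility of Berkovich with finite base change.)

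Second, I would apply Lemma \ref{lem-finitequot} to the action of $G$ on the projective variety $X_{\Gamma_0}\tensor_K K'$ to identify $X^{\Berk}_{\Gamma}\tensor_K K' = (X_{\Gamma_0}/G)^{\Berk}\tensor_K K'$ topologically with the $G$-quotient of $X^{\Berk}_{\Gamma_0}\tensor_K K'$. The deformation retraction produced in the previous paragraph is $G$-equivariant because it descended from a $\Gamma$-equivariant map, so it further descends to a deformation retraction from $X^{\Berk}_{\Gamma}\tensor_K K'$ onto $(\BT_K/\Gamma_0)/G = \BT_K/\Gamma$, proving the first assertion. The second assertion is then immediate, since any deformation retract is a homotopy equivalence.

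The main obstacle I anticipate is bookkeeping rather than genuine difficulty: checking that Berkovich's retraction really is equivariant under the full $\PGL_3(K)$ (not merely $\PGL_3(K')$), that the formation of the Berkovich space commutes with the algebraic quotient $X_{\Gamma_0}\mapsto X_{\Gamma_0}/G$ after the finite base change, and that the descended homotopy still satisfies the endpoint conditions of a deformation retraction after passing to a non-free quotient. All three are handled cleanly: the first is built into Berkovich's construction, the second is exactly Lemma \ref{lem-finitequot} applied over $K'$, and the third holds because the two endpoints of $H$ (the identity and $\tau$) are both $\Gamma$-equivariant, so each descends to a well-defined continuous map between the quotients.
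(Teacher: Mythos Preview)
Your proposal is correct and follows exactly the approach the paper takes: the paper sketches the argument in the paragraph immediately preceding the lemma, first descending Berkovich's $\PGL_3(K)$-equivariant deformation retraction through the torsion-free normal subgroup $\Gamma_0$ via covering-space theory, and then through the finite quotient $G=\Gamma/\Gamma_0$ via Lemma~\ref{lem-finitequot}. Your write-up is somewhat more careful about the bookkeeping (equivariance, base change, endpoint conditions), but the underlying strategy is identical.
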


\section{Construction of the fake plane}
\label{sec-construction}

\noindent
We fix $R=\Z_2$ throughout the rest of the
paper and suppress the subscript $K=\Q_2$ from $\Omega^2$,
$\Omegahat^2$ and $\BT$.

We recall the following construction from \cite{AK2011}.
Let $\calO$ be the ring of algebraic integers in $\Q(\sqrt{-7})$,
$\Gamma_L$ be the unitary group of the standard Hermitian lattice
$\calO[\frac{1}{2}]^3$, and $\PGamma_L$ its quotient by scalars.  
To get a lattice in $\PGL_3(\Q_2)$ we
fix an embedding $\calO\to\Z_2$.
This identifies $\PGamma_L$ with a lattice in $\PGL_3(\Q_2)$,
indeed one of the two densest-possible
lattices.
(In \cite{AK2011} we  defined $\Gamma_L$ as the
isometry group of $L[\frac12]:=L\tensor_\calO\calO[\frac{1}{2}]$ for a
more-complicated Hermitian $\calO$-lattice $L$.  But $L[\frac12]=\calO[\frac12]^3$.)



We write $\lambda,\lambdabar$ for $(-1\pm\sqrt{-7})/2$.  These are the two primes
lying over~$2$, and we choose the notation so that $\lambda$
is a
uniformizer of $\Z_2$
and $\lambdabar$ is a 
unit.
Defining $\theta$ as $\lambda-\lambdabar=\sqrt{-7}$, we obtain an
induced inner product on $\calO[\frac12]^3/\theta\calO[\frac12]^3\iso\F_7^3$.   This pairing is  symmetric and nondegenerate, yielding a
natural map from $\Gamma_L$ to the $3$-dimensional orthogonal group over $\F_7$.  This
descends to a homomorphism $\PGamma_L\to\PO_3(7)\iso\PGL_2(7)$.
We write $\Phi_L$ for the kernel.

\begin{lemma}
\label{lem-Phi-L-is-torsion-free}
$\Phi_L\sset\PGL_3(\Q_2)$ is torsion-free.
\end{lemma}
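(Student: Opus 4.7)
My plan is to rule out prime-order torsion in $\Phi_L$, which suffices because any nontrivial torsion element has a nontrivial prime-order power still in $\Phi_L$. So I would suppose $\gamma\in\Phi_L$ has prime order $p$ and is represented by some $g\in\Gamma_L$. Because the scalars in $O_3(\F_7)$ are $\pm I$, I can replace $g$ by $-g$ if needed to arrange $g\equiv I\pmod\theta$. Since $\gamma$ has order $p$ in $\PGL_3(\Q_2)$, $g^p$ is a scalar in $\GL_3(\Q_2)$, and lying in $\Gamma_L$ it must be one of the norm-one scalars $\pm(\lambda/\lambdabar)^aI$; the congruence $g^p\equiv I\pmod\theta$, combined with $\lambda\equiv\lambdabar\pmod\theta$, forces the positive sign, so $g^p=(\lambda/\lambdabar)^aI$ for some $a\in\Z$.

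The key step would be to show $p\mid a$. For $p\neq 3$, I would take determinants: $\det g=(\lambda/\lambdabar)^c$ for some $c\in\Z$ (the positive sign is forced by $\det g\equiv 1\pmod\theta$), so $cp=3a$, giving $p\mid a$ via $\gcd(p,3)=1$. For $p=3$ the determinant argument is vacuous, so I would instead use the trace. If $3\nmid a$, then $s:=(\lambda/\lambdabar)^a$ has $\lambda$-adic valuation $a$ not divisible by $3$, hence $s$ is not a cube in $\Q(\sqrt{-7})^*$, and $x^3-s$ is irreducible over $\Q(\sqrt{-7})$. Since $g^3=sI$ and $g$ is nonscalar, this irreducible cubic must be the characteristic polynomial of $g$, forcing $\mathrm{tr}(g)=0$. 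But $g\equiv I\pmod\theta$ gives $\mathrm{tr}(g)\equiv 3\pmod\theta$, and $3\not\equiv 0$ in $\F_7$, a contradiction; so $3\mid a$.

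With $p\mid a$ in hand, I would set $t:=(\lambda/\lambdabar)^{-a/p}\in\calO[\frac{1}{2}]^*$, a norm-one scalar with $t\equiv 1\pmod\theta$; then $tg\in\Gamma_L$ satisfies $(tg)^p=I$ and $tg\equiv I\pmod\theta$. If $tg\neq I$, I would write $tg=I+\theta^rh$ with $r\geq 1$ maximal and $\bar h\neq 0$ in $M_3(\F_7)$; expanding $(tg)^p=I$ yields $\sum_{k=1}^{p}\binom{p}{k}\theta^{rk}h^k=0$. For $p\neq 7$, the prime $p$ is a unit modulo $\theta$, so the leading term $p\theta^rh$ has $\theta$-adic valuation exactly $r$ while the remaining terms all have valuation $\geq 2r>r$, forcing $\bar h=0$, a contradiction. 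For $p=7$, using $7=-\theta^2$ gives $v_\theta(7)=v_\theta(\binom{7}{k})=2$ for $1\leq k\leq 6$; the leading term $-\theta^{r+2}h$ has valuation $r+2$, while every other term has valuation $\geq\min(2+2r,\,7r)>r+2$ for $r\geq 1$, again forcing $\bar h=0$. Hence $tg=I$, so $g=t^{-1}I$ is a scalar in $\Gamma_L$, contradicting nontriviality of $\gamma$. The main obstacle will be the $p=3$ case, where the determinant argument is silent and one must appeal to the trace, which critically uses that the residue characteristic $7$ of $\theta$ is coprime to $3$.
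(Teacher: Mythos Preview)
Your argument is correct, but it takes a genuinely different route from the paper's. The paper never identifies the scalar $\sigma=g^p$ explicitly and never reduces to the case $g^p=I$. Instead it runs Siegel's induction directly with the unknown scalar $\sigma$: assuming $x\equiv\sigma_n I\pmod{\theta^n}$, it expands $x^p=\sigma$ modulo $\theta^{n+1}$ (or $\theta^{n+3}$ when $p=7$) and solves for $T\pmod\theta$, finding it is again a scalar. This avoids any appeal to the unit group of $\calO[\frac12]$, any determinant or trace computation, and any special handling of $p=3$; the only case split is $p=7$ versus $p\neq7$, forced by the residue characteristic.

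Your approach invests more at the start---pinning down the norm-one scalars as $\pm(\lambda/\lambdabar)^a$, then using $\det$ (for $p\neq3$) or $\mathrm{tr}$ (for $p=3$) to prove $p\mid a$---so that you can renormalize to an honest $p$th root of $I$ and apply the cleanest form of Siegel's lemma. This buys a tidier endgame at the cost of the extra $p=3$ case and the explicit arithmetic of $\calO[\frac12]^*$. The paper's version is shorter and more uniform; yours is more concrete and makes the obstruction to torsion (the $\theta$-adic valuation) very visible. Both are valid proofs of the lemma.
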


\begin{proof}
We adapt Siegel's proof \cite[\S39]{Siegel} that the kernel of
$\GL_n(\Z)\to\GL_n(\Z/N)$ is torsion-free for any $N\neq2$.  Suppose
given some nontrivial $y\in\Phi_L$ with finite order, which we may
suppose is a rational prime~$p$.  Choose some lift $x\in\Gamma_L$ of it, so
$x^p$ is a scalar $\sigma$.  By $y\in\Phi_L$, the image of $x$ in
${\rm O}_3(7)$ is a scalar, which is to say that $x\cong\pm I$
mod~$\theta$.  We claim: for any $n\geq1$, $x$ is congruent
mod~$\theta^n$ to some scalar $\sigma_n\in\calO[\frac12]$.  It follows easily that
$x$ itself is a scalar, contrary to the hypothesis $y\neq1$.

We prove the claim if $p\neq7$ and
then show how to adapt the argument if $p=7$.  By hypothesis the claim holds
for $n=1$, with $\sigma_1=\pm1$.  For the inductive step suppose
$n\geq1$ and $x\cong\sigma_n I$ mod~$\theta^n$, so $x=\sigma_n
I+\theta^n T$ for some endomorphism $T$ of $\calO[\frac12]^3$.  We must show
that $T$ is congruent mod~$\theta$ to some scalar.  Reducing
$x^p=\sigma$ modulo $\theta^n$ and $\theta^{n+1}$ shows that
$\theta^n$ divides $\sigma-\sigma_n^p$ and that $\sigma_n^p
I+p\sigma_n^{p-1}\theta^n T\cong\sigma I$ mod~$\theta^{n+1}$.
Rearranging shows that $p\sigma_n^{p-1}T$ is the scalar
$(\sigma-\sigma_n^p)/\theta^n$, modulo~$\theta$.  Since $\sigma_n$ and
$p\neq7$ are invertible mod~$\theta$, this gives a formula for $T$
mod~$\theta$, hence $\sigma_{n+1}$ mod~$\theta^{n+1}$, and finishes
the induction.

If $p=7=-\theta^2$ then we write $x=\sigma_n I+\theta^n T$ as before,
but reduce $x^7=\sigma$ modulo $\theta^{n+2}$ and $\theta^{n+3}$
rather than modulo $\theta^n$ and $\theta^{n+1}$.  This shows that
$\theta^{n+2}$ divides $\sigma-\sigma_n^7$ and that $\sigma_n^7
I+7\sigma_n^6\theta^n T\cong\sigma I$ mod~$\theta^{n+3}$.  The rest of
the argument is the same.
\end{proof}

\begin{lemma}
\label{lem-surjection-to-PGL2-7}
$\PGamma_L\to\PGL_2(7)$ is surjective.
\end{lemma}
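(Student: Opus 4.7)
\smallskip

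\emph{Proof plan.}
The plan is to identify the target and produce enough explicit generators of the image.  Reduction modulo $\theta$ identifies $\lambda$ with $\lambdabar$, so bar-conjugation becomes trivial and the standard Hermitian form on $\calO[\tfrac12]^3$ reduces to the standard quadratic form $\sum x_i^2$ on $\F_7^3$.  Hence $\PGamma_L$ maps into $\PO_3(\F_7)$, which has order $336=|\PGL_2(7)|$ by the standard orthogonal-group formula; surjectivity is therefore equivalent to showing the image has order $336$.

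First I would note that all $48$ signed permutation matrices lie in $\Gamma_L$, and that modulo the center $\{\pm I\}$ they contribute a subgroup $W(BC_3)/\{\pm I\}\iso S_4$ of order $24$ in $\PO_3(\F_7)$.  A small calculation shows that each generator of this $S_4$, represented as a rotation in $\SO_3(\F_7)\iso\PO_3(\F_7)$, has trivial spinor norm (its axis is one of the $e_i$ of norm $1$ or of the form $(1,\pm1,0)$ of norm $2$, both squares in $\F_7$, with the $3$-cycles of odd order automatically trivial), so the whole $S_4$ lies inside $\Omega_3(\F_7)\iso\PSL_2(7)$.  By the well-known subgroup lattice of $\PGL_2(7)$ (e.g.\ from Dickson), the only subgroups of $\PGL_2(7)$ containing $S_4$ are $S_4$, $\PSL_2(7)$, and $\PGL_2(7)$; hence it suffices to produce one element of $\PGamma_L$ whose image lies outside $\PSL_2(7)$.

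For this I would take the $\calO[\tfrac12]$-unitary ``Hadamard-type'' matrix
\[
U\;=\;\tfrac{1}{\lambda}\begin{pmatrix}1&1&0\\1&-1&0\\0&0&\lambda\end{pmatrix},
\]
which is integral because $1/\lambda=\lambdabar/2\in\calO[\tfrac12]$ and unitary because $\lambda\lambdabar=2$.  Reducing modulo $\theta$ (using $\lambda\cong 3$) and representing its class inside $\SO_3(\F_7)$ yields a half-turn whose rotation axis has non-square norm in $\F_7$, so the image has nontrivial spinor norm and lies outside $\Omega_3(\F_7)\iso\PSL_2(7)$.  Combined with the $S_4$ of signed permutations, this forces the image to be all of $\PGL_2(7)$.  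The main obstacle is really just producing the extra matrix $U$: its integrality relies on $2=\lambda\lambdabar$ being a unit in $\calO[\tfrac12]$, and once $U$ is in hand the spinor-norm check and subgroup-structure argument close the proof immediately.
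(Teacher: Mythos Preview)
Your proof is correct, but the route is genuinely different from the paper's.  The paper argues structurally: it quotes from \cite{AK2011} that $\PGamma_L$ has a vertex stabilizer $L_3(2)$ in its action on $\BT$, and uses Lemma~\ref{lem-Phi-L-is-torsion-free} (torsion-freeness of $\Phi_L$) to conclude that this $L_3(2)$ injects into $\PGL_2(7)$, hence already hits all of $\PSL_2(7)$; then a conjugacy argument about the fourteen $S_4$'s inside that $L_3(2)$ (they are $\PGamma_L$-conjugate but not $L_3(2)$-conjugate) forces the image to be strictly larger than $\PSL_2(7)$.  You instead work by hand: signed permutation matrices give an explicit $S_4$ whose image you pin down inside $\Omega_3(\F_7)\iso\PSL_2(7)$ via spinor norms, and then your Hadamard-type matrix $U$ reduces to a half-turn about an axis of non-square norm, landing outside $\PSL_2(7)$; Dickson's subgroup list for $\PGL_2(7)$ finishes.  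The paper's argument is shorter if one grants the building-theoretic input and Lemma~\ref{lem-Phi-L-is-torsion-free}, and it immediately produces the full $\PSL_2(7)$ in the image rather than just an $S_4$.  Your argument, on the other hand, is entirely self-contained---it needs neither the cited vertex-stabilizer theorem nor the torsion-freeness lemma---and it exhibits a concrete element of $\PGamma_L$ mapping outside $\PSL_2(7)$, which the paper's proof does not.
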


\begin{proof}
We showed in \cite[Thm.\ 3.2]{AK2011} that $\PGamma_L$ has two orbits on vertices
of $\BT$, with stabilizers $L_3(2)$ and $S_4$.  Fix a vertex $v$ of
the first type.  By lemma~\ref{lem-Phi-L-is-torsion-free}, $\Phi_L$ is torsion-free, so the map
$\PGamma_L\to\PGL_2(7)$ is injective on this $L_3(2)$.  Its image must
be the unique copy of this group in
$\PGL_2(7)$, namely $\PSL_2(7)$.  Next,
the
fourteen subgroups $S_4$ of $L_3(2)$ are the $\PGamma_L$-stabilizers of the
neighbors of $v$.  These are all conjugate in $\PGamma_L$, but not in
$L_3(2)$.  Therefore the image of $\PGamma_L$ in $\PGL_2(7)$ must be
strictly larger than $\PSL_2(7)$, hence equal to $\PGL_2(7)$.
\end{proof}

Since $\Phi_L$ is torsion-free, non-Archimedean uniformization yields
a $\Z_2$-scheme $\mathfrak{X}_{\Phi_L}$.  We will write $\WW_L$ for it
and $W_L$ for its generic fiber.  We fix a Sylow $2$-subgroup of
$\PGL_2(7)$, which is a dihedral group $\dihedral_{16}$ of order~$16$,
and write $\Sigma_L$ for its preimage in $\PGamma_L$.  ($\Sigma$
is meant to suggest Sylow.)  Because $\Phi_L$ is normal in $\Sigma_L$, the
quotient group $\dihedral_{16}$ acts on $\Omegahat^2/\Phi_L$, hence on
$\WW_L$ by the uniqueness of algebraization.  (Indeed all of
$\PGamma_L/\Phi_L=\PGL_2(7)$ acts.)  Because $\WW_L$ is projective and
flat over $\Z_2$, its quotient $\WW_L/\dihedral_{16}$ is also
projective and flat over $\Z_2$.  We write $X_L$ for its generic fiber
$W_L/\dihedral_{16}$.  This is our fake projective plane, proven to be such
in theorem~\ref{thm-ourFPP} below.

The reader familiar with Mumford's construction \cite{Mum1} will recognize
that our constructions parallel his: he considered the projective
unitary group $\Gamma_{\!M}$ of $M[\frac12]$, where $M$ is a different
Hermitian $\calO$-lattice.  He found that its action on
$M[\frac12]/\theta M[\frac12]$ induces a surjection
$\PGamma_M\to\PSL_2(7)$.  The subgroup $\Sigma_M$ of $\PGamma_M$ corresponding to a
Sylow $2$-subgroup $D_8\sset\PSL_2(7)$ uniformizes his fake plane.
His $\Sigma_M$ is torsion-free, while our $\Sigma_L$ contains finite
subgroups~$D_8$.
The following lemma is the key that allows the construction of ``our''
fake plane to work
despite this torsion.  

\begin{lemma}
\label{lem-freeness}
$\dihedral_{16}$ acts freely on the closed points of $W_L$.
In particular, $W_L\to X_L$ is \'etale and $X_L$ is smooth.
\end{lemma}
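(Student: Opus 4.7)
The plan is to translate the freeness claim into a question about elements of $\Sigma_L$ that fix closed points of $\Omega^2$, and then rule out the offending elements via a Berkovich-retraction argument combined with an eigenvalue computation. Using the isomorphism $W_L^{\an}\iso\Omega^2/\Phi_L$ and the bijection between closed points of $W_L$ and $W_L^{\an}$, a coset $\gamma\Phi_L\in\dihedral_{16}$ fixes a closed point of $W_L$ iff there are a closed point $x\in\Omega^2$ and an element $\phi\in\Phi_L$ such that $\phi\gamma\in\Sigma_L$ fixes $x$.  So it is enough to show that only the identity of $\Sigma_L$ can fix a closed point of $\Omega^2$; this forces $\phi\gamma=1$, whence $\gamma\in\Phi_L$, contradicting the nontriviality of $\gamma\Phi_L$.

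The first step is that any $g\in\Sigma_L$ fixing a closed point $x$ of $\Omega^2$ is torsion.  The closed point $x$ determines a Berkovich point of $\Omega^{2,\Berk}$ via its maximal ideal and the unique extension to its residue field of the valuation of $\Q_2$; both are preserved by $g$, so $g$ fixes this Berkovich point.  Applying the $\PGL_3(\Q_2)$-equivariant retraction $\Omega^{2,\Berk}\to\BT$ recalled in section~\ref{sec-uniformization} produces a fixed point of $g$ on the building, and discreteness of $\Sigma_L$ then forces $g$ to have finite order.

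The main step is to classify the torsion elements of $\PGL_3(\Q_2)$ that fix a closed point of $\Omega^2$.  Choose a lift $\tilde g\in\GL_3(\Q_2)$; each irreducible factor of its characteristic polynomial over $\Q_2$ cuts out a $\Q_2$-rational eigenspace containing the corresponding geometric eigenlines.  Going through the possible $\Q_2$-factorizations of the degree-$3$ polynomial together with the eigenvalue multiplicities, one checks that unless the characteristic polynomial is irreducible of degree $3$, every geometric fixed point of $g$ lies on a $\Q_2$-rational line of $\P^2$ and hence not in $\Omega^2$.  In the remaining case the three eigenvalues are distinct roots of unity generating a cubic extension of $\Q_2$; the only such extension is the unramified one $\Q_2(\zeta_7)$, whose roots of unity are exactly $\mu_{14}$, and a short calculation using the Frobenius $\zeta_7\mapsto\zeta_7^2$ shows that $g$ must have order exactly $7$ in $\PGL_3(\Q_2)$.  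By Lemma~\ref{lem-Phi-L-is-torsion-free} the kernel $\Phi_L$ of $\Sigma_L\to\dihedral_{16}$ is torsion-free, so every finite subgroup of $\Sigma_L$ injects into $\dihedral_{16}$ and hence has order dividing $16$; in particular $\Sigma_L$ has no element of order $7$, so no such $g$ exists.

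This completes the proof that $\dihedral_{16}$ acts freely on the closed points of $W_L$.  A geometric fixed point of any element of $\dihedral_{16}$ would project to a fixed closed point, so the action is free on geometric points as well, and the standard theory of free finite-group quotients of smooth varieties in characteristic zero gives that $W_L\to X_L$ is étale and $X_L$ is smooth.  The main technical obstacle I anticipate is the eigenvalue bookkeeping in the third paragraph, especially the verifications that the only cubic extension of $\Q_2$ generated by roots of unity is $\Q_2(\zeta_7)$ and that the resulting element has order $7$ rather than some other multiple of $7$; the remaining steps are essentially formal.
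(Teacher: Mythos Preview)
Your overall architecture is sound: the translation to showing that no nontrivial element of $\Sigma_L$ fixes a closed point of $\Omega^2$ is correct, and so is the reduction to torsion elements (your Berkovich-retraction argument works, though the paper does this more cheaply by noting that some power of any infinite-order element lies in the freely acting group $\Phi_L$).  Your case analysis on the characteristic polynomial of a lift is also correct up to and including the conclusion that a torsion element with a fixed point in $\Omega^2$ must have irreducible cubic characteristic polynomial.  The gap is in your treatment of that remaining case.  You assert that ``the three eigenvalues are distinct roots of unity generating a cubic extension of $\Q_2$''; this is not justified.  Only the \emph{ratios} of eigenvalues are forced to be roots of unity (since $\tilde g^n$ is scalar); the eigenvalues themselves need not be, because a finite-order element of $\PGL_3(\Q_2)$ need not lift to a finite-order element of $\GL_3(\Q_2)$.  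Concretely, the companion matrix of $x^3-2$ lies in $\GL_3(\Q_2)$, has irreducible characteristic polynomial, and satisfies $\tilde g^3=2I$; since $2$ is not a cube in $\Q_2$, its image in $\PGL_3(\Q_2)$ has order exactly~$3$ and, by your own earlier reasoning, fixes a closed point of $\Omega^2$.  So your claimed classification (``order exactly~$7$'') is false.

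This does not doom the strategy, since you already observed that the torsion of $\Sigma_L$ injects into $\dihedral_{16}$ and hence has $2$-power order; you only need to exclude $2$-power torsion, and the order-$3$ counterexample is irrelevant to $\Sigma_L$.  But the paper carries this out far more simply than a repaired eigenvalue-ratio analysis would.  Any nontrivial $2$-group contains an involution, so it suffices to show that no involution of $\PGL_3(\Q_2)$ fixes a point of $\Omega^2$.  An involution $g$ always lifts to an involution $\tilde g\in\GL_3(\Q_2)$: from $\tilde g^2=cI$ one gets $c^3=(\det\tilde g)^2$, hence $c=(\det\tilde g/c)^2$ is a square in $\Q_2$, and one rescales.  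Then the $(\pm1)$-eigenspaces of $\tilde g$ are defined over $\Q_2$, so every fixed point of $g$ lies on a $\Q_2$-rational point or line and is therefore outside $\Omega^2$.  This two-line argument replaces your entire third paragraph.
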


We remark that $\dihedral_{16}$ has horrible stabilizers in the central
fiber of $\WW_L$, such as components with pointwise stabilizer $(\Z/2)^2$.

\begin{proof}
Recall from section~\ref{sec-uniformization} that the closed points of
$W_L$ are in bijection with the $\Phi_L$-orbits on the 
points of $\Omega^2$. The freeness of $\dihedral_{16}$'s action on this set of
these orbits is equivalent to the freeness of $\Sigma_L$'s action on
the closed points of $\Omega^2$.  Since $\Phi_L$ is a torsion-free
uniform lattice, it is normal hyperbolic in the sense of \cite[\S
  1]{Mus1}, so it acts freely on $\Omega^2$.  An infinite-order
element of $\Sigma_L$ cannot have fixed points in $\Omega^2$, because
some power of it is a nontrivial element of $\Phi_L$.  So 
only the torsion elements of $\Sigma_L$ could have fixed points.

Because $\Phi_L$ is torsion-free, the
map $\Sigma_L\to\Sigma_L/\Phi_L=\dihedral_{16}$ preserves the orders of
torsion elements.  Therefore every
torsion element of $\Sigma_L$ has $2$-power order.  To show that none
of them have fixed points in $\Omega^2$, it suffices to show that
no involution in $\Sigma_L$ has a fixed point.
In fact, no involution in $\PGL_3(\Q_2)$ has a fixed point in
$\Omega^2$: every involution lifts to an involution in $\GL_3(\Q_2)$, whose
eigenspaces are defined over $\Q_2$, hence were removed from
$\P^{2,\an}_K$ in the definition of $\Omega^2$.
\end{proof}

\begin{theorem}
\label{thm-ourFPP}
$X_L$ is a fake projective plane.
\end{theorem}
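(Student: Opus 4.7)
The plan is to apply Proposition \ref{prop-invariants} to the torsion-free lattice $\Phi_L$, then descend the resulting numerics from $W_L = X_{\Phi_L}$ to $X_L = W_L/\dihedral_{16}$ via the étale Galois cover $f\colon W_L \to X_L$ supplied by Lemma \ref{lem-freeness}.

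First I would count $N$, the number of $\Phi_L$-orbits on the vertices of $\BT$. By \cite[Thm.\ 3.2]{AK2011} there are two $\PGamma_L$-orbits of vertices, with stabilizers $L_3(2)$ and $S_4$. Because $\Phi_L\triangleleft\PGamma_L$ is torsion-free with quotient $\PGL_2(7)$ of order $336$, it intersects each such finite stabilizer $H$ trivially; thus a $\PGamma_L$-orbit with stabilizer $H$ breaks into $[\PGamma_L:\Phi_L H] = |\PGL_2(7)|/|H|$ many $\Phi_L$-orbits. This gives $336/168 + 336/24 = 2 + 14 = 16$, so $N = 16$. Proposition \ref{prop-invariants} with $q = 2$ then yields $\chi(\calO_{W_L}) = 16$, $q(W_L) = 0$, $c_1^2(W_L) = 3c_2(W_L) = 144$, and $K_{W_L}$ ample.

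Next I would descend these invariants to $X_L$. Since $f$ is étale of degree $16$, $f^\ast\Omega^1_{X_L} = \Omega^1_{W_L}$, and the projection formula forces $c_1^2(X_L) = 144/16 = 9$ and $c_2(X_L) = 48/16 = 3$, whence $c_1^2 = 3c_2 = 9$. Noether's formula then forces $\chi(\calO_{X_L}) = (9+3)/12 = 1$. In characteristic zero one has $\calO_{X_L} = (f_\ast\calO_{W_L})^{\dihedral_{16}}$ realized as a direct summand of $f_\ast\calO_{W_L}$, so $H^1(X_L,\calO_{X_L}) \hookrightarrow H^1(W_L,\calO_{W_L}) = 0$, giving $q(X_L) = 0$ and hence $P_g(X_L) = 0$. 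Ampleness of $K_{X_L}$ follows from ampleness of $K_{W_L} = f^\ast K_{X_L}$ together with the finite surjectivity of $f$ (Nakai--Moishezon).

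Smoothness is part of Lemma \ref{lem-freeness}, properness is inherited from $W_L$ under the finite quotient, and geometric connectedness follows because $h^0(\calO_{X_L})$ is preserved by the flat base change to $\Qbar_2$, with $h^0(\calO_{X_L}) = 1$ coming either from that of $W_L$ (the closed fiber has dual complex $\BT/\Phi_L$, which is connected since $\BT$ is) or from the combination $\chi(\calO_{X_L}) = 1$ with $h^1 = h^2 = 0$. The main obstacle I anticipate is the orbit count $N = 16$: one must verify the double-coset formula for a normal subgroup of index $336$ and confirm that each vertex stabilizer injects into $\PGL_2(7)$ under the reduction map. Once $N$ is pinned down, the remainder is routine étale-cover numerology and a single appeal to Noether's formula.
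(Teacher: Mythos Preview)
Your proposal is correct and follows essentially the same route as the paper: count $N=16$ via the index formula $[\PGL_2(7):H]$ for $H=L_3(2),S_4$, apply Proposition~\ref{prop-invariants} to $W_L$, and descend the invariants along the degree-$16$ \'etale cover $W_L\to X_L$ (the paper packages the $q(X_L)=0$ step as a separate lemma with the same trace-splitting argument you give). The only cosmetic differences are that the paper obtains $\chi(\calO_{X_L})=1$ by direct multiplicativity rather than Noether, and argues general type via invariance of Kodaira dimension rather than your cleaner pullback-ampleness argument for $K_{X_L}$.
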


\begin{proof}
First we count sixteen $\Phi_L$-orbits on vertices of $\BT$:
the $\PGamma_L$-orbit of vertices with stabilizer $L_3(2)$
splits into 
$[\PGL_2(7):L_3(2)]=2$ orbits under $\Phi_L$, and the
$\PGamma_L$-orbit of vertices with stabilizer $S_4$ 
splits into $[\PGL_2(7):S_4]=14$ orbits under $\Phi_L$.  

So proposition~\ref{prop-invariants} shows that $\chi(W_L)=16$,
$q(W_L)=0$, $c^2_1(W_L)=3c_2(W_L)=144$, and that $W_L$ has ample
canonical class.  We now use three times the fact that $W_L\to X_L$ is
\'etale.  First, since the degree is~$16$, we have $\chi(X_L)=1$ and
$c^2_1(X_L)=3c_2(X_L)=9$.  Second, $X_L$ has the same Kodaira
dimension as $W_L$ (e.g.\ \cite[Chap.\ I, (7.4)]{BPV}), hence has
general type.  
Third, since $W_L$ has irregularity~$0$, the following lemma shows
that $X_L$ also has irregularity~$0$.
From the definition of $\chi$ it follows that $P_g(X_L)=0$,
completing the proof.
\end{proof}

\begin{lem}\label{lem-etaleirregularityzero}
Let $X$ and $Y$ be algebraic varieties over a field $K$, and $f\colon Y\rightarrow X$ a finite flat morphism of degree not divisible by the characteristic of $K$.
Let $q>0$ be a positive integer.
Then, if $\H^q(Y,\calO_Y)=0$, we have $\H^q(X,\calO_X)=0$.
\end{lem}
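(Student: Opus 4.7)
The plan is to realize $\calO_X$ as an $\calO_X$-module direct summand of $f_*\calO_Y$ by a normalized trace map, and then to extract the cohomological consequence. Since $f$ is finite, hence affine, we have $R^if_*\calO_Y=0$ for $i>0$, and the Leray spectral sequence collapses to a canonical isomorphism $\H^q(Y,\calO_Y)\iso\H^q(X,f_*\calO_Y)$. So it suffices to produce an $\calO_X$-linear retraction of the structure map $\calO_X\to f_*\calO_Y$: any such retraction exhibits $\calO_X$ as a direct summand of $f_*\calO_Y$, and applying $\H^q(X,-)$ then makes $\H^q(X,\calO_X)$ a direct summand of $\H^q(X,f_*\calO_Y)=\H^q(Y,\calO_Y)=0$.

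To produce the retraction I would use the standard trace. Because $f$ is finite and flat, $f_*\calO_Y$ is a locally free $\calO_X$-module of rank $n:=\deg f$, and each local section $s$ acts on it by an $\calO_X$-linear multiplication operator. Sending $s$ to the trace of this operator defines an $\calO_X$-linear map $\mathrm{tr}\colon f_*\calO_Y\to\calO_X$, and by construction the composition
\[
\calO_X\longrightarrow f_*\calO_Y\stackrel{\mathrm{tr}}{\longrightarrow}\calO_X
\]
is multiplication by $n$, since the trace of the identity endomorphism of a rank-$n$ free module is $n$. By hypothesis $n$ is invertible in $K$, hence a unit in $\calO_X$, so $\frac{1}{n}\mathrm{tr}$ is the desired retraction.

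The only step requiring any care is the construction of the trace and the identity $\mathrm{tr}(1)=n$, but both are standard for finite locally free algebras and may be checked Zariski-locally on $X$, where $f_*\calO_Y$ is a free $\calO_X$-module. So there is no real obstacle; the degree hypothesis enters exactly at the point of inverting $n$, which is why the characteristic restriction is needed.
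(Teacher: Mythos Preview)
Your proof is correct and follows essentially the same approach as the paper: both use the normalized trace $\frac{1}{n}\mathrm{tr}_{Y/X}$ to split $\calO_X\hookrightarrow f_*\calO_Y$ and deduce the cohomological vanishing. You simply spell out in more detail what the paper leaves implicit, namely the identification $\H^q(Y,\calO_Y)\iso\H^q(X,f_*\calO_Y)$ via affineness of~$f$.
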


\begin{proof}
By flatness,  $f_{\ast}\calO_Y$ is locally free on $X$. Then the trace map
$\mathrm{tr}_{Y/X}\colon f_{\ast}\calO_Y\rightarrow\calO_X$, divided
by the degree of $f$, gives a splitting of the inclusion
$\calO_X\hookrightarrow f_{\ast}\calO_Y$.  Since $\calO_X$ is a direct summand
of $f_{\ast}\calO_Y$, the lemma follows immediately.
\end{proof}

\begin{remark}
The fake projective plane $X_L$ is commensurable with the Mumford's
$X_M$, by \cite[Theorem 3.3]{AK2011}.
\end{remark}

\section{Distinctness from other fake planes}
\label{sec-distinctness}

\noindent
Our final result is the following:

\begin{theorem}
The fake plane $X_L$ is not isomorphic over $\Qbar_2$ to any fake
plane uniformized by a torsion-free subgroup of $\PGL_3(\Q_2)$.
\end{theorem}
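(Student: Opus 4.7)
My plan is to apply Lemma~\ref{lem-homotopy-type-is-building-modulo-group} as a homotopy invariant: any $\Qbar_2$-isomorphism $X_L\iso X'$ descends to some finite extension $K'/\Q_2$, and passing to Berkovich spaces over $K'$ yields a homotopy equivalence between $\BT/\Sigma_L$ and $\BT/\Sigma'$, where $\Sigma'\sset\PGL_3(\Q_2)$ is a torsion-free lattice uniformizing $X'$.  So it suffices to show $\BT/\Sigma_L$ is not homotopy equivalent to $\BT/\Sigma'$ for any of the three torsion-free lattices $\Sigma'$ enumerated by \cite{IK98}.  For each such $\Sigma'$, Proposition~\ref{prop-invariants}(a) with $\chi(\calO_{X'})=1$ and $q=2$ forces $N=1$; since $\Sigma'$ acts freely on the contractible complex $\BT$, the projection $\BT\to\BT/\Sigma'$ is a universal covering and $\pi_1(\BT/\Sigma')\iso\Sigma'$, an infinite torsion-free group.

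For $\Sigma_L$ I would work out an explicit CW presentation of $\BT/\Sigma_L$ together with vertex/edge/triangle stabilizers.  The starting data are the sixteen $\Phi_L$-orbits on vertices established in the proof of Theorem~\ref{thm-ourFPP}, which split as $2+14$ under $\PGamma_L$ with respective stabilizers $L_3(2)$ and $S_4$.  The $\Sigma_L$-orbit structure is then obtained by analyzing how the Sylow-$2$ subgroup $\dihedral_{16}\sset\PGL_2(7)$ acts: computing the double-coset spaces $\dihedral_{16}\backslash\PGL_2(7)/L_3(2)$ and $\dihedral_{16}\backslash\PGL_2(7)/S_4$, together with the intersections $\dihedral_{16}\cap L_3(2)=D_8$ and $\dihedral_{16}\cap S_4$ (a $2$-subgroup of $S_4$), and doing the analogous bookkeeping for edges and triangles using the stabilizer types on those cells.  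This produces a developable complex-of-groups whose orbifold $\pi_1$ is $\Sigma_L$; the topological $\pi_1$ of $\BT/\Sigma_L$ is then $\Sigma_L$ modulo the normal closure of the vertex, edge and triangle stabilizer subgroups.

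Comparing invariants: if the resulting $\pi_1(\BT/\Sigma_L)$ contains any torsion, it is immediately distinguished from each torsion-free $\Sigma'$.  Otherwise I would fall back on its abelianization or first Betti number, comparing with $(\Sigma')^{\mathrm{ab}}$ for each of the three lattices \cite{IK98}.  The main obstacle is the combinatorial bookkeeping of orbits and stabilizers through the $\dihedral_{16}$-quotient, and the subsequent identification of the topological fundamental group as a concrete group.  Should the direct calculation prove unwieldy, an alternative route is to use the $\PGL_3(\Q_2)$-equivariant edge orientation recalled in Section~\ref{sec-uniformization}: this data is preserved by isomorphism of fake planes (it is read off from the canonical $\Z_2$-model), yielding an oriented-graph invariant on $\BT/\Sigma$ strictly finer than the homotopy type.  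A simple local count of in- versus out-edges at each vertex orbit of $\BT/\Sigma_L$ should already separate it from the three single-vertex quotients $\BT/\Sigma'$.
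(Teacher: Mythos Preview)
Your overall strategy is exactly the paper's: invoke Lemma~\ref{lem-homotopy-type-is-building-modulo-group} to reduce the question to comparing $\pi_1(\BT/\Sigma_L)$ with $\pi_1(\BT/\Sigma')\iso\Sigma'$ for torsion-free $\Sigma'$.  Where your proposal stops, however, is precisely where the work begins.  The paper actually carries out the combinatorics you only sketch: it first describes the dual complex $\BT/\Phi_L$ completely (Lemma~\ref{lem-central-fiber}), then passes to the $\dihedral_{16}$-quotient cell by cell (Lemma~\ref{lem-cells-of-central-fiber-of-X-L}), and finally simplifies the resulting $4$-vertex, $18$-edge, $15$-face CW complex (Lemma~\ref{lem-pi-1-is-Z-mod-42}) to obtain $\pi_1(\BT/\Sigma_L)\iso\Z/42$.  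Since this group is \emph{finite}, it is already distinguished from every infinite lattice $\Sigma'$; your first contingency succeeds, and neither the abelianization fallback nor the restriction to the three lattices of \cite{IK98} is needed.  So your plan is sound and would succeed if executed, but what you have written is an outline rather than a proof: the substantive content of the theorem is exactly the explicit calculation you defer.

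One caution on your alternative route.  The edge orientation on $\BT/\Sigma$ is read off from a particular $\Z_2$-model of the surface, but an isomorphism of fake planes over some finite $K'/\Q_2$ need not respect the given integral models, nor need it be induced by an element of $\PGL_3(\Q_2)$.  Your assertion that the oriented-graph data is ``preserved by isomorphism of fake planes'' is therefore unjustified, and this shortcut cannot replace the fundamental-group computation.
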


\begin{proof}
Suppose $X$ is a fake projective plane uniformized by a torsion-free
subgroup $\PGamma$ of $\PGL_3(\Q_2)$.  Although we don't need it, we
remark that there are three possibilities: Mumford's example and two
due to Ishida--Kato\cite{IK98}.  By lemma~\ref{lem-homotopy-type-is-building-modulo-group}, the Berkovich space
$X^\Berk$ has the homotopy type of $\BT/\PGamma$.  Since $\BT$ is
contractible and $\Gamma$ acts freely (by the absence of torsion), the
fundamental group of $X^\Berk$ is isomorphic to $\PGamma$.
Furthermore, lemma~\ref{lem-homotopy-type-is-building-modulo-group}
assures us that the base extension $X^\Berk\tensor_{\Q_2}K'$ also
has fundamental group $\PGamma$, for
any finite extension $K'$ of $\Q_2$. 

Repeating the argument shows that $X_L^\Berk\tensor_{\Q_2}K'$ is
homotopy equivalent to $\BT/\Sigma_L$, for any finite extension $K'$
of $\Q_2$.  And  lemma~\ref{lem-pi-1-is-Z-mod-42} below shows that $\BT/\Sigma_L$ has the
homotopy type of the standard presentation complex of $\Z/42$.   That
is, a circle with a disk attached by wrapping the boundary of the disk
$42$ times around the circle.  It follows that
$X_L^\Berk\tensor_{\Q_2}K'$ has fundamental group $\Z/42$.

If $X$ and $X_L$ were isomorphic over $\Qbar_2$ then they would
be isomorphic over some finite extension $K'$ of $\Q_2$.  Then the
isomorphism $X\tensor_{\Q_2}K'\iso X_L\tensor_{\Q_2}K'$ would imply
$X^\Berk\tensor_{\Q_2}K'\iso X^\Berk_L\tensor_{\Q_2}K'$.  But this is
impossible since the left side has infinite
fundamental group and the right side has fundamental group $\Z/42$.
\end{proof}

It remains to prove lemma~\ref{lem-pi-1-is-Z-mod-42}, describing the homotopy type of
$\BT/\Sigma_L$.  The rest of this section is devoted to this.  The key
is understanding the central fiber of $\WW_L/\dihedral_{16}$, which in turn requires
understanding the central fiber of $\WW_L$.  Recall that the central
fiber of $\Omegahat^2$ is a normal crossing divisor with properties
described in section~\ref{sec-uniformization}.

The central fiber of $\WW_L$ is normal crossing because it is the
quotient of the central fiber of $\Omegahat^2$ by the group $\Phi_L$
acting freely.  To describe it we need to enumerate its components,
double curves and triple points.  Our description in the next lemma
refers to the ``elements'' of the finite projective geometry, meaning
the seven points and seven lines of the projective plane over $\F_2$.
We regard these as the vertices of a graph, with two elements incident
if one corresponds to a point and the other to a line containing it.
The symbols $e,f$ will always refer to such ``elements'', and the
symbols $p,p',p''$ (resp.\ $l,l',l''$) will always refer to points
(resp.\ lines) of this geometry.  The automorphism group of the graph
is $\PGL_2(7)\iso\PSL_2(7)\semidirect(\Z/2)\iso\GL_3(2)\semidirect(\Z/2)$.  Classically, the
elements of $\PGL_2(7)$ not in $\PSL_2(7)$ are called
``correlations''; they exchange points and lines.

\begin{lemma}
\label{lem-central-fiber}
$\WW_{L,0}$ has $16$ components, $112$ double curves
and $112$ triple points.  In more detail,
\leavevmode\hbox{}
\begin{enumerate}
\item
\label{item-names-of-components}
We may label $\WW_{L,0}$'s components
$\Pi$, $\Pi^*$ and
$C_e$, such that 
$\PGL_2(7)$ permutes the $C_e$'s the
same way it permutes the elements $e$ of the finite geometry, and 
correlations exchange
$\Pi$ and~$\Pi^*$.
\item
\label{item-Pi-and-Pi-star-disjoint}
$\Pi$ and $\Pi^*$ are disjoint.
\item
\label{item-how-C-e-meets-Pi-and-Pi-star}
$D_e:=C_e\cap\Pi$ and $D_e^*:=C_e\cap\Pi^*$ are irreducible curves.
\item
\label{item-C-e-disjoint-from-C-f-non-incident-case}
If $e\neq f$ are not incident then $C_e\cap C_f=\emptyset$.
\item
\label{item-how-C-e-meets-C-f-incident-case}
If $e,f$ are incident then $C_e\cap C_f$ has two components.  One,
which we call $D_{e f}$, has 
self-intersection $-1$ in
$C_e$ and $-2$ in $C_f$.  The other, called $D_{f e}$, has these
numbers reversed.
\item
\label{item-double-curves-of-C-e}
The singular locus of each $C_e$ is a curve of three components.  For
each $f$ incident to~$e$, exactly one of these components meets $C_f$;
we call it $E_{e f}$.
\item
\label{item-triple-points-lying-in-Pi-and-Pi-star}
If $e,f$ are incident then each of $P_{e f}:=\Pi\cap C_e\cap C_f$ and
$P_{e f}^*:=\Pi^*\cap C_e\cap C_f$ is a single point.  
\item
\label{item-Q-e-f}
If $e,f$ are incident then $Q_{e f}:=E_{e f}\cap C_f$ is a single point.
\item
\label{item-self-triple-intersections}
Each $C_e$ has two triple-self-intersection points.  At such a triple
point the incident double curves are $E_{e f_1}$, $E_{e f_2}$ and
$E_{e f_3}$ where $f_1,f_2,f_3$ are the elements of the geometry
incident to $e$.  We may label these triple points $R_{e o}$, where $o$ is a
cyclic ordering on $\{f_1,f_2,f_3\}$, such that $\PGL_2(7)$ permutes
them the same way it permutes the ordered pairs $(e,o)$.
\end{enumerate}
\renewcommand{\theenumi}{\alph{enumi}}
The components fall into two $\PGL_2(7)$-orbits:
\begin{enumerate}
\item
$\{\Pi,\Pi^*\}$
\item
the fourteen $C_e$'s.
\end{enumerate}
The double curves fall into four $\PGL_2(7)$-orbits:
\begin{enumerate}
\item
the 
seven $D_p$'s and seven $D_l^*$'s
\item the seven
$D_l$'s and seven $D_p^*$'s
\item
the forty-two $D_{e f}$'s
\item
the forty-two $E_{e f}$'s.
\end{enumerate}
The triple points fall into three $\PGL_2(7)$-orbits:
\begin{enumerate}
\item the twenty-one
$P_{e f}$'s and twenty-one $P_{e f}^*$'s
\item 
the forty-two $Q_{e f}$'s
\item the twenty-eight $R_{e o}$'s.
\end{enumerate}
\end{lemma}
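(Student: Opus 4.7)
The plan is to convert the entire lemma into a combinatorial statement about $\Phi_L$-orbits on the vertices, edges, and triangles of $\BT$, using the correspondence from section~\ref{sec-uniformization} with components, double curves, and triple points of $\WW_{L,0}$. The principal inputs are: \cite[Thm.~3.2]{AK2011}, which describes the $\PGamma_L$-action on $\BT$ with two vertex orbits having stabilizers $L_3(2)\cong\PSL_2(7)$ and $S_4$; the Drinfeld picture of $\Omegahat^2_0$, whereby each component is $\P^2_{\F_2}$ blown up at its seven rational points, with the fourteen outgoing double curves naturally labeled by the points and lines of the Fano plane (exceptional divisors being the $(-1)$-curves and proper transforms of lines the $(-2)$-curves); and the index formula that each $\PGamma_L$-orbit on vertices, edges, or triangles splits into $[\PGL_2(7):H]$ orbits under $\Phi_L$, where $H$ is the image of the $\PGamma_L$-stabilizer in $\PGL_2(7)$.

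Applying the index formula to vertices gives the $16 = 2+14$ components: the $L_3(2)$-orbit splits into $[\PGL_2(7):\PSL_2(7)] = 2$ pieces ($\Pi$ and $\Pi^*$, exchanged by the outer involution of $\PGL_2(7)/\PSL_2(7)$), and the $S_4$-orbit into $[\PGL_2(7):S_4] = 14$ pieces (the $C_e$'s, with the $\PGL_2(7)$-action on them matching the natural action on the fourteen Fano elements). Item (ii) is immediate since $\BT$ has no edges between two $L_3(2)$-type vertices. For (iii) I would check that the fourteen neighbors in $\BT$ of a chosen lift $v$ of $\Pi$ lie in fourteen distinct $\Phi_L$-orbits --- otherwise some $\phi\in\Phi_L\setminus\{1\}$ would send $v$ to a second $L_3(2)$-neighbor of a common $S_4$-vertex, impossible by the vertex-orbit data. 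Thus each $C_e$ meets $\Pi$ in exactly one curve $D_e$, and the correlation action provides the $D_e^*$'s.

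For items (iv)--(vi) I would work locally at an $S_4$-vertex $w$ above $C_e$. Its fourteen outgoing double curves carry the Fano labeling of the normalization $\tilde C_e$, with the two edges to $\Pi$ and $\Pi^*$ fixed by the $S_4$-stabilizer and the remaining twelve permuted in $S_4$-orbits determined by the $S_4$-action on the Fano plane with $e$ distinguished. Each of the three Fano elements $f$ incident to $e$ accounts for four of the twelve outgoing curves, which sort into: the honest double curve $D_{ef}$ meeting $C_f$ (exceptional on $\tilde C_e$, line-transform on $\tilde C_f$, giving self-intersections $-1$ and $-2$); the opposite $D_{fe}$ (self-intersections $-2$ and $-1$); and two further outgoing curves identified by $\Phi_L$ into a single self-intersection curve $E_{ef}$ of $C_e$. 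This yields (iv)--(vi).

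Finally, for items (vii)--(ix) I would enumerate the $\PGamma_L$-orbits of triangles and split each via the index formula into three classes: triangles of type $(L_3(2), S_4, S_4)$ yielding the $21+21$ points $P_{ef}$ and $P^*_{ef}$; triangles of type $(S_4, S_4, S_4)$ with exactly two vertices $\Phi_L$-equivalent, yielding the $42$ points $Q_{ef}$; and triangles with all three vertices in a common $\Phi_L$-orbit above some $C_e$, yielding the $28$ points $R_{eo}$ indexed by a Fano element $e$ and a cyclic ordering $o$ of its three incident Fano elements (the orientation coming from the edge-orientation convention of section~\ref{sec-uniformization}). The main obstacle will be the combinatorial bookkeeping in (iv)--(vi): sorting the twelve remaining outgoing curves on each $C_e$ into the types $D_{ef}, D_{fe}, E_{ef}$ with the correct self-intersection numbers, and pinpointing which pairs of $\BT$-edges collapse under $\Phi_L$ into the $E_{ef}$'s.
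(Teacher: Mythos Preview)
Your overall framework---translating the lemma into combinatorics of $\BT/\Phi_L$ via the index formula $[\PGL_2(7):H]$---is exactly the paper's framework, and your treatment of (i)--(ii) is essentially the same. But for (iii)--(ix) your local approach diverges from the paper's, and there is a genuine gap.

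For (iii), your argument (``otherwise $\phi(v)$ would be a second $L_3(2)$-neighbor of a common $S_4$-vertex, impossible by the vertex-orbit data'') does not close: nothing in the vertex-orbit data alone forbids an $S_4$-vertex from having two $\Phi_L$-equivalent $L_3(2)$-neighbors. The paper instead argues on the $\Pi$ side: since $\tilde\Pi\to\Pi$ is an isomorphism, the $14$ double curves in $\Pi$ are labeled by Fano elements, and the $L_3(2)$-stabilizer of a curve labeled $e$ fixes exactly one side component, forcing that component to be $C_e$.

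The more serious gap is (iv)--(vi). Your plan is to sit at an $S_4$-vertex $w$ and sort its twelve $S_4$-type neighbors into packets of four indexed by the three incident Fano elements $f$, with each packet containing $D_{ef}$, $D_{fe}$, and a $\Phi_L$-identified pair giving $E_{ef}$. But you have no mechanism to establish this sorting: \cite[Thm.~3.2]{AK2011} gives only the vertex orbits and stabilizers, not the $\PGamma_L$-orbits on edges at $w$ or the $\Phi_L$-identifications among the twelve neighbors. You are effectively assuming the answer. The paper avoids this entirely by two ideas you do not mention: a \emph{parity argument} (for any $e\neq f$, an element of $\PGL_2(7)$ swaps $e$ and $f$, hence swaps the $(-1)$- and $(-2)$-roles of any component of $C_e\cap C_f$, so such components come in pairs), and \emph{global counting} (there are exactly $112$ double curves and $112$ triple points; after subtracting the $28$ curves $D_e,D_e^*$, the parity argument plus orbit sizes on non-incident pairs exclude those intersections, the incident pairs then account for $42$ curves, and the residual $42$ are forced to be self-intersection curves, three per $C_e$). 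Items (vii)--(ix) are then also obtained by counting down from $112$ triple points, not by enumerating triangle orbits in $\BT$ as you propose---which would face the same input problem.

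In short, your ``main obstacle'' is not bookkeeping but a missing idea: the parity-plus-counting mechanism is what drives the proof, and your local analysis cannot substitute for it without edge- and triangle-level information that the cited reference does not supply.
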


Note that $P_{e f}=P_{f e}$ and $P_{e f}^*=P_{f e}^*$, unlike all other cases
involving double subscripts.  

\begin{proof}
By \cite[Thm.\ 3.2]{AK2011}, $\PGamma_L$ acts on the vertices of $\BT$ with two
orbits, having stabilizers $L_3(2)$ and $S_4$.  We will pass between
vertices of $\BT$ and components of $\Omegahat^2$ without comment
whenever it is convenient.  Write $\Pitilde$ for a component of
$\Omegahat^2$ with stabilizer $L_3(2)$.  Recall that $\Phi_L$ is the
kernel of a surjection $\PGamma_L\to\PGL_2(7)$.  Since $\Phi_L$ is
torsion-free, $L_3(2)$ must inject into $\PGL_2(7)$, so its
$\PGamma_L$-orbit splits into two $\Phi_L$-orbits.  We write $\Pi$ and
$\Pi^*$ for them, and also for the corresponding components of $\WW_{L,0}$.  The same argument shows that the $\PGamma_L$-orbit
with stabilizer $S_4$ splits into $[\PGL_2(7):S_4]=14$ orbits under
$\Phi_L$.  Because there is only one conjugacy class of $S_4$'s in
$\PGL_2(7)$, the action of $\PGL_2(7)$ on these components of
$\WW_{L,0}$ must
correspond to the action on the elements of the finite geometry.  We
have proven~\eqref{item-names-of-components}.  We will call the
components other than $\Pi,\Pi^*$ the {\it side components}; this
reflects our mental image of $\WW_{L,0}$: $\Pi$
above, $\Pi^*$ below, and the other components around the sides.

By the explicit description of $\PGamma_L$ in the proof of theorem~3.2
of \cite{AK2011}, each of $\Pitilde$'s neighbors in
$\BT$ has $\PGamma_L$-stabilizer $S_4$, hence is inequivalent to
$\Pitilde$.  Therefore the union of the $\PGamma_L$-translates of
$\Pitilde$ is the disjoint union of its components.  Since $\Phi_L$
permutes these components freely, it follows
that $\Pi$ and $\Pi^*$ are disjoint, proving
\eqref{item-Pi-and-Pi-star-disjoint}.  It also follows  that $\Pitilde$ maps
isomorphically to $\Pi$.

Therefore $\Pi$ is a copy of $\P^2_{\F_2}$ blown up at its seven
$\F_2$-points.  The curves along which it meets other components are
the seven exceptional divisors and the strict transforms of the seven
$\F_2$-rational lines.  Suppose $e$ is the element of the finite
geometry corresponding to one of these curves.  The
$L_3(2)$-stabilizer of $e$ preserves exactly one element of the
geometry, namely $e$, hence exactly one side component, namely $C_e$.
So $C_e$ must be the side component that meets $\Pi$ along the chosen
curve.  In this way the $14$ side components account for all the
double curves lying in $\Pi$, proving that each $C_e\cap\Pi$ is
irreducible.  By symmetry the same holds for $C_e\cap\Pi^*$.  This
proves 
\eqref{item-how-C-e-meets-Pi-and-Pi-star}, and then 
\eqref{item-triple-points-lying-in-Pi-and-Pi-star} is immediate.

A simple counting argument shows that $\WW_{L,0}$ has $112$ double
curves and $112$ triple points.  We have already named the $28$ double
curves ($D_e$ and $D_e^*$) 
that lie in $\Pi$ or $\Pi^*$, leaving~$84$.
We observe that
if two side components meet then their
intersection consists of an even number of components.  This is
because for any elements $e\neq f$ of the geometry, there is some
$g\in\PGL_2(7)$ exchanging them.  So if a component of $C_e\cap C_f$ has
self-intersection $-1$ in $C_e$ and $-2$ in $C_f$, then its $g$-image has
these self-intersection numbers reversed, and therefore cannot be the
same curve.  

If $e,f$ are incident then $C_e\cap C_f$ contains $P_{e f}$ and is
therefore non\-empty.  By the previous paragraph it has evenly
many components.  Because there are $21$ unordered incident pairs
$e,f$, this accounts for either $42$ or $84$ of the $84$ remaining double
curves, according to whether $C_e\cap C_f$ has $2$ or $4$ components.
We will see later that they account for~$42$ of them.

We claim next that if $e$ and $f$ are a
point and a nonincident line, then $C_e\cap C_f=\emptyset$.  This is
because such pairs $\{e,f\}$ form a $\PGL_2(7)$-orbit of size $28$.
If $C_e\cap C_f\neq\emptyset$ then the argument from the previous paragraph shows that 
such intersections account
for at least $56$ double curves, while at most $42$
remain unaccounted for.  The same argument shows $C_e\cap C_f=\emptyset$ if
$e,f$ are distinct lines or distinct points.  This proves
\eqref{item-C-e-disjoint-from-C-f-non-incident-case}. 

Consider one of the $112-42=70$ triple points outside $\Pi\cup\Pi^*$
and the three (local) components of $\WW_{L,0}$ there.  Two of these
have the same type (i.e., both correspond to points or both to lines).
Since they meet, the previous paragraph shows that these components
must coincide.  It follows that each side-component has at least one
curve of self-intersection.  We saw above that if $e,f$ are incident
then $C_e\cap C_f$ has either two or four components, and in the
latter case these intersections account for all double curves not in
$\Pi\cup\Pi^*$.  Therefore this case is impossible, proving 
\eqref{item-how-C-e-meets-C-f-incident-case}.  Now
\eqref{item-Pi-and-Pi-star-disjoint}--\eqref{item-how-C-e-meets-C-f-incident-case}
show that every one of the $112-28-42=42$ remaining double curves is a
self-intersection curve of a side component.  So each side component
contains $42/14=3$ such curves, proving the first part of
\eqref{item-double-curves-of-C-e}.

Next we claim that there exist incident $e,f$ such that there is a
triple point where two of the (local) components are $C_e$ and the
third is $C_f$.  To see this choose any incident $e,f$ and recall from
\eqref{item-triple-points-lying-in-Pi-and-Pi-star} that $C_e\cap C_f=D_{e f}\cup D_{f e}$ meets
$\Pi\cup\Pi^*$ exactly twice.  So it must contain some other triple
point.  By our understanding of double curves the third component
there must be $C_e$ or $C_f$.  After exchanging $e$ and $f$ if necessary,
this proves the claim.  It follows by symmetry that for any incident
$e,f$ there is such a triple point, and in fact exactly one since
there are $42$ ordered incident pairs $e,f$ and only $70$
triple points outside $\Pi\cup\Pi^*$.   It follows from this
uniqueness that exactly one of the three self-intersection curves of
$C_e$ meets $C_f$, and it does so at a single point.  This proves the
second half of \eqref{item-double-curves-of-C-e} and 
all of \eqref{item-Q-e-f}.

The remaining $112-42-42=28$ triple points must all be
triple-self-intersections of the $C_e$'s, so each $C_e$ contains two
of them.  Now fix $e$ and write $\tau$ and $\tau'$ for these
self-intersection points.  Obviously the only double-curves that can
pass through $\tau$ or $\tau'$ are $E_{e f_1}$, $E_{e f_2}$ and $E_{e f_3}$.
The $S_4\sset\PGL_2(7)$ fixing $e$ contains an element of order~$3$
cyclically permuting $f_1,f_2,f_3$, necessarily 
fixing each of $\tau,\tau'$.  It follows that each of $\tau,\tau'$ lies
in all three of the $E_{e f_i}$.  Therefore each determines a cyclic
ordering on $\{E_{e f_1},E_{e f_2},E_{e f_3}\}$, hence on $\{f_1,f_2,f_3\}$.  
Since $S_4$ acts on the $E_{e f_i}$ as $S_3$, both
cyclic ordering occur, and it follows that $\tau,\tau'$ induce
the two possible cyclic orderings.  This proves
\eqref{item-self-triple-intersections}.
\end{proof}

Translating the lemma into the dual-complex language gives a complete
description of the dual complex of $\WW_{L,0}$:

(1) Its
vertices are $\Pi$, $\Pi^*$ and the $C_e$.  

(2) For each $p$, there is an edge $D_p$ from $\Pi$ to $C_p$ and an
edge $D_p^*$ from $C_p$ to $\Pi^*$.

(3) For each $l$ there is an edge $D_l^*$ from $\Pi^*$ to $C_l$  and an edge
$D_l$ from $C_l$ to  $\Pi$.

(4) For each ordered pair $(e,f)$ with $e,f$ incident, there is an
edge $D_{e f}$ from $C_e$ to $C_f$ and an edge $E_{e f}$ from $C_e$ to itself.

(5) For each point $p$ and line $l$ that are incident, there is a
$2$-cell $P_{pl}=P_{l p}$ with its boundary attached along the loop
$D_p.D_{pl}.D_l$, and a $2$-cell $P_{pl}^*=P_{l p}^*$ with its
boundary attached along the loop $D_l^*.D_{l p}.D_p^*$.

(6) For each ordered pair $(e,f)$ with $e,f$ incident, there is a
$2$-cell $Q_{e f}$ with its boundary attached along the loop
$D_{e f}.D_{f e}.E_{e f}$. 

(7) For each $e$, there are $2$-cells $R_{e o}$ and $R_{e o'}$ where
$o,o'$ are the two cyclic orderings on $\{f_1,f_2,f_3\}$.  Their
boundaries are attached along the loops $E_{e f_1}.E_{e f_2}.E_{e f_3}$
and $E_{e f_3}.E_{e f_2}.E_{e f_1}$.

\medskip
Really we are interested in the complex $\BT/\Sigma_L$, which is the
same as the quotient of the complex just described by the dihedral
group $\dihedral_{16}$.  It is easy to see that if an element of $\dihedral_{16}$
fixes setwise one of the cells just listed, then it fixes it
pointwise.  Therefore $\BT/\Sigma_L$ is a CW complex with one cell for
each $\dihedral_{16}$-orbit of cells of $\BT/\Phi_L$.  To tabulate these
orbits we note that $\dihedral_{16}$ contains correlations, so $\Pi$ and
$\Pi^*$ are equivalent, and every $C_l$ is equivalent to some $C_p$.
Next, the subgroup $D_8$ sending points to points and lines to lines
is the flag stabilizer in $L_3(2)$.  So it acts on the points
(resp.\ lines) with orbits of sizes $1$, $2$ and $4$.  We write
$p,p',p''$ (resp.\ $l,l',l''$) for representatives of these orbits.
Since $\dihedral_{16}$ normalizes $D_8$, the correlations in it exchange the
orbit of points of size $1$, resp.\ $2$, resp.\ $4$ with the orbit of
lines of the same size.  That is, $p$, resp.\ $p'$, resp.\ $p''$ is
$\dihedral_{16}$-equivalent to $l$, resp.\ $l'$, resp.\ $l''$.

\def\Pibar{\overline{\Pi}}
\def\Cbar{C\llap{$\overline{\phantom{\rm C}}$}}
\def\Dbar{D\llap{$\overline{\phantom{\rm D}}$}}
\def\Ebar{E\llap{$\overline{\phantom{\rm E}}$}}
\def\Pbar{P\llap{$\overline{\phantom{\rm P}}$}}
\def\Qbar{Q\llap{$\overline{\phantom{\rm Q}}$}}
\def\Rbar{R\llap{$\overline{\phantom{\rm R}}$}}

\begin{lemma}
\label{lem-cells-of-central-fiber-of-X-L}
$\BT/\Sigma_L$ is the CW complex with four vertices $\Pibar$, $\Cbar_p$,
$\Cbar_{p'}$
and $\Cbar_{p''}$, and higher-dimensional cells as follows.  Its $18$ edges
are
\smallskip
\begin{center}
\def\itself{\rm itself}
\begin{tabular}{lccccccccc}%
    &$\Dbar_p$&$\Dbar_{p'}$&$\Dbar_{p''}$&$\Dbar_{p p}$&$\Dbar_{p p'}$&$\Dbar_{p' p}$&$\Dbar_{p' p''}$&$\Dbar_{p''p'}$&$\Dbar_{p''p''}$\\
from&$\Pibar$&$  \Pibar$&$   \Pibar$&$\Cbar_{p} $&$\Cbar_{p} $&$\Cbar_{p'} $&$\Cbar_{p'}  $&$\Cbar_{p''}  $&$\Cbar_{p''}$\\
to&$  \Cbar_p$&$\Cbar_{p'}$&$\Cbar_{p''}$&\itself&$\Cbar_{p'} $&$\Cbar_{p}  $&$\Cbar_{p''} $&$\Cbar_{p'}   $&\itself\\
\noalign{\smallskip}%
\cline{2-10}%
\noalign{\smallskip}%
    &$\Dbar_p^*$&$\Dbar_{p'}^*$&$\Dbar_{p''}^*$&$\Ebar_{p p}$&$\Ebar_{p p'}$&$\Ebar_{p' p}$&$\Ebar_{p' p''}$&$\Ebar_{p''p'}$&$\Ebar_{p''p''}$\\
from&$\Cbar_p  $&$\Cbar_{p'}  $&$\Cbar_{p''} $&$\Cbar_{p} $&$\Cbar_{p}  $&$\Cbar_{p'} $&$\Cbar_{p'}  $&$\Cbar_{p''}  $&$\Cbar_{p''}$\\
to&$  \Pibar  $&$\Pibar    $&$\Pibar    $&\itself&\itself &\itself &\itself &\itself    &\itself
\end{tabular}
\end{center}
\par\smallskip\noindent
Its $15$ two-cells and their boundaries are
\medskip
\begin{center}
\setlength{\tabcolsep}{0pt}
\begin{tabular}{llcll}
$\Pbar_{p p}$&${}:\Dbar_p.\Dbar_{p p}.\Dbar_p^*$&\kern20pt
&$\Pbar_{p p'}$&${}:\Dbar_p.\Dbar_{p p'}.\Dbar_{p'}^*$\\
$\Pbar_{p' p}$&${}:\Dbar_{p'}.\Dbar_{p' p}.\Dbar_p^*$&\kern20pt
&$\Pbar_{p' p''}$&${}:\Dbar_{p'}.\Dbar_{p' p''}.\Dbar_{p''}^*$\\
$\Pbar_{p''p'}$&${}:\Dbar_{p''}.\Dbar_{p''p'}.\Dbar_{p'}^*$&\kern20pt
&$\Pbar_{p''p''}$&${}:\Dbar_{p''}.\Dbar_{p''p''}.\Dbar_{p''}^*$\\
$\Qbar_{p p}   $&${}:\Dbar_{p p}.\Dbar_{p p}.\Ebar_{p p}$&\kern20pt
&$\Qbar_{p p'}  $&${}:\Dbar_{p p'}.\Dbar_{p' p}.\Ebar_{p p'}$\\
$\Qbar_{p' p}  $&${}:\Dbar_{p' p}.\Dbar_{p p'}.\Ebar_{p' p}$&\kern20pt
&$\Qbar_{p' p''} $&${}:\Dbar_{p' p''}.\Dbar_{p''p'}.\Ebar_{p' p''}$\\
$\Qbar_{p''p'} $&${}:\Dbar_{p''p'}.\Dbar_{p' p''}.\Ebar_{p''p'}$&\kern20pt
&$\Qbar_{p''p''}$&${}:\Dbar_{p''p''}.\Dbar_{p''p''}.\Ebar_{p''p''}$\\
$\Rbar_p$&${}:\Ebar_{p p}.\Ebar_{p p'}^2$&\kern20pt
&$\Rbar_{p'}$&${}:\Ebar_{p' p}.\Ebar_{p' p''}^2$\\
$\Rbar_{p''}$&${}:\Ebar_{p''p'}.\Ebar_{p''p''}^2$
\end{tabular}
\end{center}
\end{lemma}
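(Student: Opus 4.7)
The plan is to realize $\BT/\Sigma_L$ as the $\dihedral_{16}$-quotient of the CW complex $\BT/\Phi_L$ whose cells and attaching maps are catalogued in items (1)--(7) just before the lemma. First I justify the parenthetical claim made there: that every element of $\dihedral_{16}$ stabilizing a cell setwise fixes it pointwise, so $\BT/\Sigma_L$ is a CW complex with one cell per $\dihedral_{16}$-orbit. For vertices this is automatic; for the edges $D_e, D_e^*$ one endpoint lies in $\{\Pi,\Pi^*\}$ and the other in the set of $C_e$'s, so the two endpoints belong to distinct $\dihedral_{16}$-orbits of vertices and cannot be swapped; for $D_{ef}$ with $e\ne f$ the endpoints $C_e, C_f$ are distinguished by the $\PGL_2(7)$-invariant orientation $-1>-q$ on the double curve; and a self-loop $E_{ef}$ has no pair of ends to exchange. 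Each $2$-cell has triangular boundary, and since $|\dihedral_{16}|=16$ is coprime to $3$ no cyclic rotation is possible; the only remaining case is a reflection swapping two boundary edges, which is ruled out by the same orientation data (and, for $R_{eo}$, by the fact that reversing the cyclic order $o$ would carry $R_{eo}$ to $R_{eo'}$, not stabilize it).

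The second step is to enumerate the $\dihedral_{16}$-orbits on cells of $\BT/\Phi_L$. As recorded just before the lemma, $D_8\sset\dihedral_{16}$ has orbits of sizes $1,2,4$ on points (representatives $p,p',p''$) and on lines ($l,l',l''$) of $PG(2,\F_2)$, and a correlation in $\dihedral_{16}\setminus D_8$ pairs each point-orbit with the line-orbit of the same size. This gives three element-orbits $\{p,l\},\{p',l'\},\{p'',l''\}$ and four vertex-orbits: $\Pibar$ together with $\Cbar_p,\Cbar_{p'},\Cbar_{p''}$. An explicit calculation with the flag-stabilizer $D_8\sset\GL_3(\F_2)$ shows the three lines through any point split across element-orbits as $1+2$ in the pattern: through a point of $\{p,l\}$, $1$ in $\{p,l\}$ and $2$ in $\{p',l'\}$; through a point of $\{p',l'\}$, $1$ in $\{p,l\}$ and $2$ in $\{p'',l''\}$; through a point of $\{p'',l''\}$, $1$ in $\{p',l'\}$ and $2$ in $\{p'',l''\}$. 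Combined with the correlation-identifications this produces six orbits each of $D_{ef}$'s and $E_{ef}$'s; six orbits among the combined $P_{pl}$'s and $P_{pl}^*$'s; and on the $R_{eo}$'s just three orbits $\Rbar_p,\Rbar_{p'},\Rbar_{p''}$, because the $D_8$-stabilizer of each $e$ has order-$2$ image in $S_3=S_4/V_4$ and so swaps the two cyclic orderings. The total orbit counts $4,18,15$ match the lemma.

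Step three is to read off each attaching map by projecting the known attaching map of a chosen representative from items (2)--(7). For example the boundary of $Q_{pl}$ is $D_{pl}\cdot D_{lp}\cdot E_{pl}$; since $D_{pl}$ and $D_{lp}$ both map to the self-loop $\Dbar_{pp}$ at $\Cbar_p$ while $E_{pl}$ maps to $\Ebar_{pp}$, the projected boundary is $\Dbar_{pp}\cdot\Dbar_{pp}\cdot\Ebar_{pp}$, matching the table; and the $\Rbar_p$ boundary $\Ebar_{pp}\cdot\Ebar_{pp'}^2$ directly encodes the $1+2$ incidence split above. The other twenty-odd boundaries are verified by the same mechanism. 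The main obstacle is the purely combinatorial bookkeeping of Step~2---concretely identifying which of the $240$ labelled cells of $\BT/\Phi_L$ lies in each of the $4+18+15$ $\dihedral_{16}$-orbits---but once the $D_8$-incidence pattern in $PG(2,\F_2)$ is computed, every remaining check is a line or two.
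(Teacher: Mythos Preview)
The proposal is correct and follows essentially the same approach as the paper. Both pass to $\dihedral_{16}$-orbits on the catalogued cells of $\BT/\Phi_L$, use the $D_8$-orbit structure on points and lines of $PG(2,\F_2)$ together with the correlations in $\dihedral_{16}\setminus D_8$ to enumerate orbit representatives, and then project the attaching maps recorded in items (2)--(7). Your write-up supplies more detail on the ``setwise stabilizer $=$ pointwise stabilizer'' claim, which the paper dismisses as easy, and organizes the incidence combinatorics around the $1{+}2$ splits rather than listing the six ordered-flag orbits \eqref{eq-list-of-pairs} explicitly; but the substance is identical. One small imprecision: a self-loop $E_{ef}$ \emph{does} have two ends that an automorphism could in principle swap (reversing the loop); what actually rules this out is the same edge-orientation argument you invoke for $D_{ef}$, since $\PGL_3(K)$ preserves the $-1>-q$ orientation on every edge of $\BT$.
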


\begin{proof}
The remarks above show that the $\dihedral_{16}$-orbits on vertices of
$\BT/\Phi_L$ have representatives $\Pi$, $C_p$, $C_{p'}$, $C_{p''}$.
We add a bar to indicate their images, the vertices of $\BT/\Sigma_L$.  

By the presence of correlations, the edges $D_e$ and $D_e^*$ with $e$
a line are $\dihedral_{16}$-equivalent to edges $D_f^*$ and $D_f$ with $f$ a
point.  Therefore orbit representatives for the $\dihedral_{16}$-action on
the $28$ edges listed under (2) and (3) are $D_p$, $D_{p'}$,
$D_{p''}$, $D_p^*$, $D_{p'}^*$,
$D_{p''}^*$.  We add a bar to indicate their images.   

Again using the presence of correlations, the $\dihedral_{16}$-orbits of
ordered pairs $(e,f)$ with $e$ and $f$ incident are in bijection with
the $D_8$-orbits of such pairs in which $e$ is a point.  
These
$D_8$-orbits are represented by 
\begin{equation}
\label{eq-list-of-pairs}
(p,l),\ (p,l'),\ (p',l),\ (p',l''),\ (p'',l')\ \hbox{and}\ (p'',l''),
\end{equation}
which therefore index the six
$\dihedral_{16}$-orbits on the $42$ edges $D_{e f}$ (resp.\ $E_{e f}$).  
The edges $D_{pl}$, $D_{pl'}$, $D_{p' l}$,
$D_{p' l''}$, $D_{p''l'}$ and $D_{p''l''}$ go from $C_p$ to $C_l$,
$C_p$ to $C_{l'}$, $C_{p'}$ to $C_l$, etc.  Therefore their images go
from $\Cbar_p$ to itself, $\Cbar_p$ to $\Cbar_{p'}$, $\Cbar_{p'}$ to $\Cbar_p$, etc.  We
call the images $\Dbar_{p p}$, $\Dbar_{p p'}$, $\Dbar_{p' p}$,
etc.  
The
edges $E_{pl}$, $E_{pl'}$, $E_{p' l}$, $E_{p' l''}$, $E_{p''l'}$ and
$E_{p''l''}$ are loops based at $C_p$, $C_p$, $C_{p'}$, $C_{p'}$,
$C_{p''}$ and~$C_{p''}$.  We call their images $\Ebar_{p p}$,
$\Ebar_{p p'}$, $\Ebar_{p' p}$, etc.

The two-cells $P_{pl}$ meet $\Pi$ but not $\Pi^*$, while the
$P_{pl}^*$ meet $\Pi^*$ but not $\Pi$.  Therefore the
$\dihedral_{16}$-orbits on these cells are in bijection with the
$D_8$-orbits on the $P_{pl}$.  As in the previous paragraph,
orbit representatives are $P_{pl}$, $P_{pl'}$, $P_{p' l}$, $P_{p' l''}$,
$P_{p''l'}$ and $P_{p''l''}$.  
We call their images $\Pbar_{p p}$, $\Pbar_{p p'}$, etc., and their
attaching maps are easy to work out.  For example, the boundary of
$P_{pl'}$ is given above as $D_p.D_{pl}.D_l$.  The images of the first
two terms are $\Dbar_p$ and $\Dbar_{p p}$, and $D_l$ is
$\dihedral_{16}$-equivalent to $D_p^*$, so the image of the third term is
$\Dbar_p^*$.   Therefore the boundary of the disk $\Pbar_{pl'}$ is
attached along $\Dbar_p.\Dbar_{p p}.\Dbar_p^*$.

In the same way, $\dihedral_{16}$-orbit representatives on the $2$-cells
$Q_{e f}$ are $Q_{pl}$, $Q_{pl'}$, $Q_{p' l}$, $Q_{p' l''}$, $Q_{p''l'}$
and $Q_{p''l''}$.  We indicate their images in a similar way to the
other images: we add a bar
and convert subscript $l$'s to $p$'s.  As an example we work out
the boundary of $\Qbar_{p' p''}$, using the boundary of $Q_{p' l''}$
given above as $D_{p' l''}.D_{l''p'}.E_{p' l''}$.  The images of the
first and third terms are $\Dbar_{p' p''}$ and $\Ebar_{p' l''}$.  For
the image of the second term, we apply a correlation sending $l''$ to
$p''$.  So the ordered pair $(l'',p')$ is $\dihedral_{16}$-equivalent to
some ordered pair $(p'',m)$ where $m$ is a line incident to $p''$ and
$D_8$-equivalent to $l'$.  This is $D_8$-equivalent to some pair from
\eqref{eq-list-of-pairs}, and $(p'',l')$ is the only possibility.  
Therefore
$D_{l''p'}$ is $\dihedral_{16}$-equivalent to $D_{p''l'}$, so
the boundary of $\Qbar_{p' p''}$ is
$\Dbar_{p' p''}.\Dbar_{p''p'}.\Ebar_{p' p''}$.   The other cases are
essentially the same.

For the $\dihedral_{16}$-orbits on the $2$-cells $R_{e o}$ we note that each
of $p,p',p''$ is fixed by an element of $D_8$ that exchanges two of
the three incident lines.  It follows the $\dihedral_{16}$-orbit
representatives on these $2$-cells are $R_{p o}$, $R_{p' o'}$ and
$R_{p''o''}$, where $o$ (resp. $o'$, $o''$) is a fixed cyclic ordering
on the three lines incident to $p$ (resp. $p'$, $p''$).  We write
$\Rbar_p$, $\Rbar_{p'}$ and $\Rbar_{p''}$ for their images.  Their
boundary maps can be worked out using the following.  The three lines
through $p$ are $l$, $l'$, and another line which is $D_8$-equivalent
to $l'$.  The three pairs $(p',m)$, with $m$ a line through $p'$, are
$D_8$-equivalent to $(p',l)$, $(p',l'')$ and $(p',l'')$.  The three
pairs $(p'',m)$ with $m$ a line through $p''$, are $D_8$-equivalent to
$(p'',l')$, $(p'',l'')$ and $(p'',l'')$.  It follows that the
boundaries of $\Rbar_p$, $\Rbar_{p'}$ and $\Rbar_{p''}$ are attached
along the stated loops.
\end{proof}

\begin{lemma}
\label{lem-pi-1-is-Z-mod-42}
$\BT/\Sigma_L$ is homotopy-equivalent to the standard presentation
complex of $\Z/42$.  In particular, its  fundamental group is $\Z/42$.
\end{lemma}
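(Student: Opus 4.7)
The plan is to reduce the CW complex $\BT/\Sigma_L$ from Lemma~\ref{lem-cells-of-central-fiber-of-X-L} to the standard presentation complex of $\Z/42$ through a sequence of simple homotopy equivalences. Each step is either the collapse of a spanning-tree edge, or a Tietze-type elementary collapse of an edge together with a $2$-cell whose boundary word expresses that edge as a product of the others; both operations are homotopy equivalences of $2$-complexes, and a quick check of Euler characteristics ($4-18+15=1=1-1+1$) confirms that the combinatorics are at least consistent with the target.

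First I would collapse the spanning tree $\{\Dbar_p,\Dbar_{p'},\Dbar_{p''}\}$ of the $1$-skeleton, yielding a $2$-complex with one vertex, $15$ oriented loops, and $15$ attached $2$-cells.  Write $a=\Dbar_{p p}$, $b=\Dbar_{p p'}$, $c=\Dbar_{p' p}$, $d=\Dbar_{p' p''}$, $e=\Dbar_{p''p'}$, $f=\Dbar_{p''p''}$ for the six $\Dbar_{ef}$ loops, $a^*,b^*,c^*$ for the three $\Dbar_e^*$ loops, and $\alpha,\beta,\gamma,\delta,\epsilon,\zeta$ for the six $\Ebar_{ef}$ loops (in parallel ordering).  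Reading the attaching maps directly from the lemma, the six $\Pbar$-cells give $aa^*=bb^*=ca^*=dc^*=eb^*=fc^*=1$, the six $\Qbar$-cells give $a^2\alpha=bc\beta=cb\gamma=de\delta=ed\epsilon=f^2\zeta=1$, and the three $\Rbar$-cells give $\alpha\beta^2=\gamma\delta^2=\epsilon\zeta^2=1$.

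Next I would eliminate generators in waves.  Use $\Pbar_{pp},\Pbar_{pp'},\Pbar_{p'p''}$ to solve $a^*=a^{-1}$, $b^*=b^{-1}$, $c^*=d^{-1}$ and remove $a^*,b^*,c^*$; the other three $\Pbar$-cells then reduce to $c=a$, $e=b$, $f=d$, removing $c,e,f$.  The six $\Qbar$-cells next solve $\alpha=a^{-2}$, $\beta=a^{-1}b^{-1}$, $\gamma=b^{-1}a^{-1}$, $\delta=b^{-1}d^{-1}$, $\epsilon=d^{-1}b^{-1}$, $\zeta=d^{-2}$ and remove the six Greek generators.  What remains is three generators $a,b,d$ and three relations from the $\Rbar$-cells.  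The relation $\epsilon\zeta^2=1$ from $\Rbar_{p''}$ becomes $d^{-1}b^{-1}d^{-4}=1$, hence $b=d^{-5}$; substituting into $\gamma\delta^2=1$ from $\Rbar_{p'}$ gives $d^5 a^{-1} d^{8}=1$, hence $a=d^{13}$.  Substituting both into $\alpha\beta^2=1$ from $\Rbar_p$, which reads $a^{-3}b^{-1}a^{-1}b^{-1}=1$, produces $d^{-39}\cdot d^{5}\cdot d^{-13}\cdot d^{5}=d^{-42}=1$.  What remains is one generator $d$ and one relation $d^{42}$, i.e.\ the standard presentation complex of $\Z/42$.

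The main obstacle is bookkeeping rather than mathematical subtlety: one must translate each of the $15$ boundary cycles into a word in the chosen generators while respecting orientations and the (non-abelian) cyclic order of traversal, and at each step verify that the generator to be eliminated appears exactly once with exponent $\pm 1$ in the chosen relation, so that the corresponding Tietze move really is an elementary collapse.  The order of eliminations proposed above was chosen precisely so that each step is legal; no computation is heavier than the final exponent count $-39+5-13+5=-42$.
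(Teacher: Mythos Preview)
Your proof is correct and follows essentially the same strategy as the paper's: collapse a spanning tree, use the $\Pbar$-cells to eliminate six edges, use the $\Qbar$-cells to eliminate the six $\Ebar$-loops, and then perform Tietze moves on the three remaining $\Rbar$-relations to reach $\langle d\mid d^{42}\rangle$. The only cosmetic difference is that you collapse the tree $\{\Dbar_p,\Dbar_{p'},\Dbar_{p''}\}$ while the paper collapses $\{\Dbar_p^*,\Dbar_{p'}^*,\Dbar_{p''}^*\}$, so your surviving generator is $d=\Dbar_{p'p''}$ rather than $D_{p''}$; the sequence of eliminations and the final exponent arithmetic match exactly.
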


\begin{proof}
To simplify matters we build up the $2$-complex in several stages,
suppressing the bars from the names of cells to lighten the notation.
First we define $K_1$ as the $1$-complex with the $4$ vertices and the
edges $D_p,D_{p'},D_{p''},D_p^*,D_{p'}^*,D_{p''}^*$.  We collapse the
last three edges to points, leaving a rose with three petals, which we
will call $K_2$.  If the boundary of a $2$-cell to be attached later
involves one of the collapsed edges then we also collapse that portion
of the boundary (i.e., we may ignore it).

We define $K_3$ by attaching to $K_2$ the edges
\begin{equation}
\label{eq-the-D-s}
D_{p p},D_{p p'},D_{p' p},D_{p' p''},D_{p''p'},D_{p''p''}
\end{equation}
(which are
loops in $K_2$) and the $2$-cells $P_{\ast\ast}$ 
having the same
subscripts.  We may deformation-retract $K_3$ back to $K_2$ because
each of the newly-adjoined edges is involved in only one of the
$2$-cells.  In particular, the loops \eqref{eq-the-D-s} are homotopic
rel basepoint to the inverses of $D_p$, $D_p$, $D_{p'}$, $D_{p'}$,
$D_{p''}$ and $D_{p''}$.  

We define $K_4$ by attaching to $K_2$ the edges
\begin{equation}
\label{eq-the-E-s}
E_{p p},E_{p p'},E_{p' p},E_{p' p''},E_{p''p'},E_{p''p''}
\end{equation}
and the $2$-cells $Q_{\ast\ast}$
having the same subscripts.  Just as
for $K_3$, we may deformation-retract $K_4$ back to $K_2$.  The loops
\eqref{eq-the-E-s} are homotopic rel basepoint to $D_p^2$,
$D_{p'}D_p$, $D_{p}D_{p'}$, $D_{p''}D_{p'}$, $D_{p'}D_{p''}$ and
$D_{p''}^2$.

Finally we define $K_5$ by attaching the cells $R_{p}$, $R_{p'}$,
$R_{p''}$ to $K_2$.  $\BT/\Sigma_L$ is homotopy-equivalent to this, hence to
the rose with three petals $D_p$, $D_{p'}$, $D_{p''}$ with three disks
attached, along the curves $D_p^2\bigl(D_{p'}D_p\bigr){}^2$, $D_p
D_{p'}\bigl(D_{p''}D_{p'}\bigr){}^2$ and
$D_{p'}D_{p''}\bigl(D_{p''}D_{p''}\bigr){}^2$.  Regarding these as
relators defining $\pi_1(\BT/\Sigma_L)$, the third one
allows us to eliminate $D_{p'}$ and replace it by $D_{p''}^{-5}$.
Then the second one allows us to eliminate $D_p$ and replace it
by $D_{p''}^{13}$.  The remaining relation then reads $D_{p''}^{42}=1$.
\end{proof}

\frenchspacing
\begin{small}

\end{small}

\end{document}